\def\normo#1{\left\|#1\right\|}
\def\abs#1{|#1|}
\def\aabs#1{\big|#1\big|}
\def\brk#1{\left(#1\right)}
\def\fbrk#1{\{#1\}}
\def\rev#1{\frac{1}{#1}}
\newtheorem{theorem}{Theorem}[section]
\newtheorem{proposition}[theorem]{Proposition}
\newtheorem{definition}[theorem]{Definition}
\newtheorem{lemma}[theorem]{Lemma}
\newtheorem{corollary}[theorem]{Corollary}
\newtheorem{remark}[theorem]{Remark}
\begin{document}
\title{{ $L^p$ Boundedness of Commutators of Riesz Transforms \\Associated to Schr\"{o}dinger Operator}\thanks{
Research supported by NNSF of China No.10471002, RFDP of China No:
20060001010.
 } }

\author{Zihua Guo, Pengtao Li, and Lizhong Peng  \date{}\\
{\small
LMAM, School of Mathematical Sciences, Peking University}\\ {\small Beijing 100871, China}\\
{\small E-mail: zihuaguo@math.pku.edu.cn, li\_ptao@163.com,
lzpeng@pku.edu.cn} }\maketitle


{\bf Abstract:}\quad In this paper we consider $L^p$ boundedness of
some commutators of Riesz transforms associated to Schr\"{o}dinger
operator $P=-\Delta+V(x)$ on $\mathbb{R}^n,\ n\geq 3$. We assume
that $V(x)$ is non-zero, nonnegative, and belongs to $B_q$ for some
$q \geq n/2$. Let $T_1=(-\Delta+V)^{-1}V,\
T_2=(-\Delta+V)^{-1/2}V^{1/2}$ and $T_3=(-\Delta+V)^{-1/2}\nabla$.
We obtain that $[b,T_j]\ (j=1,2,3)$ are bounded operators on
$L^p(\mathbb{R}^n)$ when $p$ ranges in a interval, where $b \in
\mathbf{BMO}(\mathbb{R}^n)$. Note that the kernel of $T_j\
(j=1,2,3)$ has no smoothness.

 {\bf Keywords:}\quad Commutator, $\mathbf{BMO}$, Smoothness,
 Boundedness, Riesz transforms associated to Schr\"{o}dinger operators

{\bf 2000 MS Classification:} 47B38, \ 42B25, \ 35Q40

\section{Introduction}
Let $P=-\Delta+V(x)$ be the Schr\"odinger differential operator on
$\mathbb{R}^n,n\geq3$. Throughout the paper we will assume that
$V(x)$ is a non-zero, nonnegative potential, and belongs to $B_q$
for some $q > n/2$. Let $T_{j}\ (j=1,2,3)$ be the Riesz transforms
associated to Schr\"{o}dinger operators, namely,
$T_1=(-\Delta+V)^{-1}V,\ T_2=(-\Delta+V)^{-1/2}V^{1/2}$ and
$T_3=(-\Delta+V)^{-1/2}\nabla$. $L^p$ boundedness of $T_j\
 (j=1,2,3)$ was widely studied(\cite{SH}, \cite{ZH}). In this paper, we will
discuss the $L^p$ boundedness of the commutator operators
$[b,T_j]=bT_j-T_jb\ (j=1,2,3)$, where $b \in
\mathbf{BMO}(\mathbb{R}^n)$.

A nonnegative locally $L^{q}$ integrable function $V(x)$ on
$\mathbb{R}^{n}$ is said to belong to $B_{q}\ (1<q<\infty)$, if
there exists $C>0$ such that the reverse H\"{o}lder inequality
\begin{eqnarray}\label{e1}
\brk{\frac{1}{|B|}\int_{B}V^{q}dx}^{\frac{1}{q}}\leq
C\brk{\frac{1}{|B|}\int_{B}Vdx}
\end{eqnarray}
holds for every ball $B$ in $\mathbb{R}^{n}$.

\begin{remark}\label{r1} By H\"older inequality we can get that
$B_{q_1}\subset B_{q_2}$, for $q_1\geq q_2>1$. One remarkable
feature about the $B_q$ class is that, if $V \in B_q$ for some
$q>1$, then there exists $\epsilon >0$, which depends only on n and
the constant C in (\ref{e1}), such that $V \in B_{q+\epsilon}$
(\cite{GE}). It's also well known that, if $V\in B_q,\ q>1$, then
$V(x)dx$ is a doubling measure, namely for any $r>0,\ x\in
\mathbb{R}^n$,
\begin{eqnarray}\label{e2}
\int_{B(x,2r)}V(y)dy\leq C_0 \int_{B(x,r)}V(y)dy.
\end{eqnarray}
\end{remark}

It was proved that if $V \in B_n$, then $T_3$ is a
Calder\'{o}n-Zygmund operator (\cite{SH}). According to the
classical result of R.Coifman, R.Rochberg, and G.Weiss (\cite{CRW}),
$[b,T_3]$ is bounded on $L^p\ (1<p<\infty)$ in this case. So we
restrict ourselves to the case that $V \in B_q \ (n/2 < q <n)$, when
considering $[b,T_3]$.

We recall that an operator $T$ taking $C_c^{\infty}(R^n)$ into
$L_{loc}^1(R^n)$ is called a Calder\'{o}n-Zygmund operator if

(a) $T$ extends to a bounded linear operator on $L^2(R^n)$,

(b) there exists a kernel K such that for every $f \in
L_c^{\infty}(R^n)$, $$Tf(x)=\int_{R^n}K(x,y)f(y)dy\quad a.e.\ on\
\{suppf\}^c,$$

(c) the kernel $K(x,y)$ satisfies the Calder\'{o}n-Zygmund estimate
\begin{eqnarray}
&|K(x,y)| \leq \frac{C}{|x-y|^{n}};\label{e3}\\
&|K(x+h,y)-K(x,y)|\leq \frac{C|h|^{\delta}}{|x-y|^{n+\delta}};\label{e4}\\
&|K(x,y+h)-K(x,y)|\leq
\frac{C|h|^{\delta}}{|x-y|^{n+\delta}};\label{e5}
\end{eqnarray}
for $x,y\in R^{n},|h|<\frac{|x-y|}{2}$ and for some $\delta >0$.

If $T$ is a Calder\'{o}n-Zygmund operator, $b\in \mathbf{BMO}$, the
 boundedness on every $L^p\ (1<p<\infty)$ of [b,T] was first discovered
 by Coifman, Rochberg and Weiss (\cite{CRW}). Later, Str\"{o}mberg \cite{JA} gave a simple
proof, adopting the idea of relating commutators with the sharp
maximal operator of Fefferman and Stein. In both proof, the
smoothness of the kernel (\ref{e4}) plays a key role. However, in
our problem the kernel has no smoothness of this kind due to $V$.
This difficulty can be overcome by our basic idea. We discover that
the kernels have some other kind of smoothness.
\quad\begin{definition} K(x,y) is said to satisfy $H(m)$ for some
$m\geq 1$, if there exists a constant $C>0$, such that, $\forall\
l>0, \ x, x_0 \in \mathbb{R}^n$ with $\abs{x-x_0} \leq l $, then
\begin{eqnarray}\label{e6}
\sum_{k=5}^{\infty} k(2^kl)^{\frac{n}{m'}}\brk{\int_{2^kl \leq
\abs{y-x_0} < 2^{k+1}l} \abs{K(x,y)-K(x_0,y)}^mdy}^{1/m}<C,
\end{eqnarray}
where $1/m'=1-1/m$.
\end{definition}

This kind of smoothness was not new. We find that the case $m=1$ was
given by Meyer(\cite{ME}). It's easily seen that if $K(x,y)$
satisfies (\ref{e4}), then $K(x,y)$ satisfies $H(m)$ for every
$m\geq 1$. By H\"older inequality we can get that if $K(x,y)$
satisfies $H(m)$ for some $m\geq 1$, then $K(x,y)$ satisfies $H(t)$
for $1\leq t\leq m$. We now list some results concerning $L^p$
boundedness of $T_j\ (j=1,2,3)$, and refer the readers to \cite{SH}
for further details. We will adopt the notation $1/p'=1-1/p$ for $p
\geq 1$ throughout the paper.

\begin{theorem}[Theorem 3.1, \cite{SH}]\label{tA}
Suppose $V \in B_q$ and $ q \geq n/2$. Then, for $q'\leq p \leq
\infty$,
$$\normo{(-\Delta+V)^{-1}V f}_p \leq C_{p}\|f\|_{p}.$$
\end{theorem}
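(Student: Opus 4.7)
The strategy is to establish the two endpoints $p=\infty$ and $p=q'$ separately and then fill in the intermediate range by Riesz--Thorin interpolation. Throughout, let $\Gamma(x,y)$ denote the integral kernel of $(-\Delta+V)^{-1}$, so that
$$T_1 f(x) = \int_{\R^n} \Gamma(x,y)V(y) f(y)\,dy.$$

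\emph{The endpoint $p=\infty$ via the maximum principle.} Set $u = T_1 f$, which solves $(-\Delta+V)u = Vf$ weakly. Assume first $f\ge 0$ with $\normo{f}_\infty \le 1$. Then the function $w=1-u$ satisfies $(-\Delta+V)w = V(1-f)\ge 0$, and $w \to 1$ at infinity since $u \to 0$ (using that $\Gamma(x,y)$ decays faster than the Newtonian kernel). The maximum principle for $-\Delta+V$ then forces $w \ge 0$, i.e.\ $u\le 1$. A symmetric argument gives $u\ge 0$. Splitting a general $f$ into positive and negative parts yields $\normo{T_1 f}_\infty \le \normo{f}_\infty$.

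\emph{The endpoint $p=q'$ via duality.} Since $\Gamma(x,y)=\Gamma(y,x)$, the formal $L^2$-adjoint is $T_1^* = V\cdot (-\Delta+V)^{-1}$, and it suffices to show $\normo{T_1^* g}_q \le C\normo{g}_q$. Writing $v=(-\Delta+V)^{-1}g$, one invokes Shen's sharp Green's function bound
$$0\le \Gamma(x,y) \le \frac{C_N}{(1+m(x,V)\abs{x-y})^N \abs{x-y}^{n-2}},$$
where $m(\cdot,V)$ is the associated auxiliary function. Decomposing the defining integral for $v(x)$ into dyadic annuli based on the critical scale $m(x,V)^{-1}$ and using (\ref{e1}) together with its self-improvement $V\in B_{q+\epsilon}$ to control $\int_B V^q$, one derives a pointwise estimate of the form
$$V(x)\abs{v(x)} \le C \brk{M(\abs{g}^{q'})(x)}^{1/q'},$$
where $M$ denotes the Hardy--Littlewood maximal operator. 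Because $q>1$, the maximal theorem applied with exponent $q/q'>1$ yields $\normo{T_1^* g}_q \le C\normo{g}_q$.

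\emph{Interpolation.} The Riesz--Thorin theorem applied to the analytic family $T_1$ on the strip $q'\le p\le \infty$ gives the desired $L^p$ estimate for every intermediate $p$.

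The main obstacle is the $L^{q'}$ estimate: everything there depends on the sharp decay of $\Gamma(x,y)$ in terms of $m(x,V)$, and on the careful use of the reverse H\"older property (\ref{e1}) on a dyadic family of balls tuned to this critical scale. The $L^\infty$ endpoint and the final interpolation are essentially soft.
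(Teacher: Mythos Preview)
This theorem is not proved in the present paper; it is quoted verbatim from Shen \cite{SH} (his Theorem~3.1), so there is no in-paper argument to compare against. Shen's own proof does not proceed by an $L^\infty$/$L^{q'}$ endpoint-plus-interpolation scheme. Instead he uses the kernel bound of Theorem~\ref{tD} together with the reverse H\"older inequality and the properties of $m(x,V)$ to obtain a \emph{pointwise} domination
\[
|T_1 f(x)| \;\le\; C\,\bigl(M(|f|^{q'})(x)\bigr)^{1/q'},
\]
which immediately yields $L^p$ boundedness for every $p>q'$; the endpoint $p=q'$ then follows from the self-improvement $V\in B_{q+\epsilon}$ noted in Remark~\ref{r1}.

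Your $L^\infty$ endpoint via the maximum principle is correct and pleasant. The problem is the other endpoint. The pointwise inequality you assert for the adjoint,
\[
V(x)\,|v(x)| \;\le\; C\bigl(M(|g|^{q'})(x)\bigr)^{1/q'},
\]
cannot hold: the decay of $\Gamma(x,y)$ is governed by $m(x,V)$, not by $V(x)$ itself, and the dyadic decomposition you describe produces at best $|v(x)|\lesssim m(x,V)^{-2}\,Mg(x)$. Multiplying by $V(x)$ then requires $V(x)\,m(x,V)^{-2}\le C$, which is false pointwise for a general $V\in B_q$ (only averages of $V$ over balls of radius $\sim m(x,V)^{-1}$ are controlled). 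Moreover, even granting your pointwise bound, the final step ``$q/q'>1$ because $q>1$'' is wrong: $q/q'=q-1$, so you would need $q>2$, which fails for $n=3,4$ when $q$ is close to $n/2$. The clean fix is to abandon duality here and argue on $T_1$ directly, obtaining the pointwise maximal bound displayed above; this is exactly Shen's route.
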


\begin{theorem}[Theorem 5.10, \cite{SH}]\label{tB}
Suppose $V \in B_q$ and $ q \geq n/2$. Then, for $(2q)'\leq p \leq
\infty$,
$$\normo{(-\Delta+V)^{-1/2}V^{1/2} f}_p \leq C_{p}\|f\|_{p}.$$
\end{theorem}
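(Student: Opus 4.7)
The plan is to interpolate between $L^2$ and $L^\infty$ for the right half of the range and to dualize to the adjoint $T_2^* = V^{1/2}(-\Delta+V)^{-1/2}$ for the left half. For $p=2$ the bound follows from a quadratic form inequality: since $V \leq -\Delta+V$ in the sense of quadratic forms, $V^{1/2}(-\Delta+V)^{-1}V^{1/2}\leq I$ on $L^2$, and hence
\[
\|T_2 f\|_2^2 = \langle V^{1/2}f,\,(-\Delta+V)^{-1}V^{1/2}f\rangle \leq \|f\|_2^2.
\]

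For $p=\infty$ I would use the subordination formula $(-\Delta+V)^{-1/2} = c\int_0^\infty t^{-1/2}e^{-t(-\Delta+V)}\,dt$ together with heat-kernel bounds for the Schr\"odinger operator. The Feynman--Kac inequality $0\leq p_t^V(x,y) \leq p_t(x,y)$ alone yields only $|K_{1/2}(x,y)| \leq C|x-y|^{1-n}$, which is insufficient because $\int V(y)^{1/2}|x-y|^{1-n}\,dy$ need not be finite. The needed refinement is Shen's sharper decay of $p_t^V$ involving the auxiliary function $m(x,V)$, which upgrades the kernel of $T_2$ to
\[
|K_2(x,y)| \leq \frac{C_N\, V(y)^{1/2}}{|x-y|^{n-1}}\bigl(1+|x-y|\,m(y,V)\bigr)^{-N}.
\]
Combined with the $B_q$ property, this allows dyadic summation of the annular integrals of $V^{1/2}$ around $x$ and yields $T_2:L^\infty\to L^\infty$; Riesz--Thorin interpolation with the $L^2$ bound then covers $p\in[2,\infty]$.

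To cover $p\in[(2q)',2]$ I would show that $T_2^*$ is bounded on $L^{2q}$. This requires estimating $\|V^{1/2}(-\Delta+V)^{-1/2}g\|_{2q}$, for which one combines the $B_q$ hypothesis (so $V^{1/2}\in B_{2q}$) with pointwise kernel bounds for $(-\Delta+V)^{-1/2}$ analogous to those above and a H\"older split. Once $T_2^*:L^{2q}\to L^{2q}$ is known, interpolation with the trivial $L^2$ bound gives $T_2^*$ on $L^{p'}$ for $p'\in[2,2q]$, and duality delivers $T_2$ on $L^p$ for $p\in[(2q)',2]$.

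The main obstacle is the $L^\infty$ kernel bound: one must produce enough decay beyond the critical scale $m(x,V)^{-1}$ to absorb the accumulation of $V^{1/2}$ at infinity. This is precisely where the auxiliary function enters, encoding the balance between the oscillation of $V$ and the reverse H\"older exponent $q$; without $V\in B_q$ the estimate fails.
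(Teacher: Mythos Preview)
This theorem is not proved in the present paper; it is quoted verbatim from Shen \cite{SH} (as Theorem~5.10 there) and invoked as a black box in the proof of Theorem~\ref{t1}. There is therefore no proof here to compare your proposal against.

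That said, your outline is broadly in the spirit of how such results are obtained. The $L^2$ bound via the quadratic-form inequality $V\le -\Delta+V$ is correct and standard. The kernel estimate you write down for $K_2$ is exactly \eqref{e9} of Lemma~\ref{l5} in this paper, and combined with Lemma~\ref{l2} it does give the $L^\infty$ bound by dyadic summation over annuli. For $p<2$ the duality/interpolation strategy via $T_2^*=V^{1/2}(-\Delta+V)^{-1/2}$ is the right one and is essentially what Shen does. One caution: your parenthetical ``$V\in B_q$ so $V^{1/2}\in B_{2q}$'' is not correct as stated---from $\bigl(\tfrac{1}{|B|}\int_B V^q\bigr)^{1/q}\le C\,\tfrac{1}{|B|}\int_B V$ and Jensen one only gets $\bigl(\tfrac{1}{|B|}\int_B V^q\bigr)^{1/(2q)}\le C^{1/2}\bigl(\tfrac{1}{|B|}\int_B V\bigr)^{1/2}$, whereas $\tfrac{1}{|B|}\int_B V^{1/2}\le\bigl(\tfrac{1}{|B|}\int_B V\bigr)^{1/2}$ goes the wrong way. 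Fortunately this claim is not what the argument actually needs: the endpoint $T_2^*:L^{2q}\to L^{2q}$ follows from the kernel bound for $(-\Delta+V)^{-1/2}$ together with H\"older and the reverse H\"older inequality for $V$ itself, not for $V^{1/2}$.
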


\begin{theorem}[Theorem 0.5, \cite{SH}]\label{tC}
Suppose $V \in B_q$ and $\frac{n}{2}\leq q <n$. Let
$(1/{p_0})=(1/q)-(1/n)$. Then, for $p_0'\leq p< \infty$,
$$\normo{(-\Delta+V)^{-1/2}\nabla f}_{p}\leq C_{p}\|f\|_{p}.$$
\end{theorem}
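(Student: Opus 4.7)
The plan is a standard two-endpoint strategy, with $L^2$ at the bottom and $L^{p_0}$ at the top of the range, applied after a duality reduction. By duality, boundedness of $(-\Delta+V)^{-1/2}\nabla$ on $L^p$ for $p_0'\leq p<\infty$ is equivalent to boundedness of $\nabla(-\Delta+V)^{-1/2}$ on $L^{p'}$ for $1<p'\leq p_0$, which is the form I will treat. The $L^2$ endpoint is immediate: for $u=(-\Delta+V)^{-1/2}f$ the identity $\langle(-\Delta+V)u,u\rangle=\|f\|_2^2$ yields $\|\nabla u\|_2^2+\|V^{1/2}u\|_2^2=\|f\|_2^2$, so $\nabla(-\Delta+V)^{-1/2}$ maps $L^2\to L^2$.

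The heart of the matter is the upper endpoint $p'=p_0$. Using the subordination formula
$$\nabla(-\Delta+V)^{-1/2}f(x)=\frac{1}{\sqrt{\pi}}\int_0^\infty \nabla_x e^{-t(-\Delta+V)}f(x)\,\frac{dt}{\sqrt{t}}$$
and Feynman--Kac domination for the Schr\"odinger heat kernel, one expresses the integral kernel $K(x,y)$ of $\nabla(-\Delta+V)^{-1/2}$ in terms of the Green's function $\Gamma(x,y)$ of $-\Delta+V$. The reverse H\"older hypothesis $V\in B_q$, combined with the auxiliary-function machinery of Shen, then yields the annular estimate
$$\left(\int_{R\leq|x-y|<2R}|K(x,y)|^{p_0}\,dy\right)^{1/p_0}\leq C R^{-n/p_0'}$$
uniformly in $x$ and $R$, which is enough to dominate $|\nabla(-\Delta+V)^{-1/2}f|(x)$ pointwise by an $L^{p_0'}$-averaged Hardy--Littlewood maximal function and extract the $L^{p_0}$-bound.

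With both endpoints in hand, Marcinkiewicz interpolation covers $2\leq p'\leq p_0$. For $1<p'<2$ I would run a Calder\'on--Zygmund decomposition of $f$ at height $\lambda$: the good part is handled by the $L^2$-bound, while each bad block $b_j$ supported on a cube $Q_j$ is controlled off a fixed dilate of $Q_j$ by an averaged-smoothness property of the kernel (essentially the $H(m)$-inequality for $m$ slightly greater than $1$), giving a weak-$(1,1)$ estimate that interpolates with $L^2$ to fill the remaining range. The main obstacle is the kernel estimate itself: quantitative, scale-invariant control on $\nabla_x\Gamma(x,y)$ must be squeezed out of the abstract assumption $V\in B_q$, and the exponent $p_0$ with $1/p_0=1/q-1/n$ is exactly the Sobolev/Riesz-potential threshold for the gain from the integral $V\ast|x|^{1-n}$, so no improvement of $p_0$ can come from this line of argument.
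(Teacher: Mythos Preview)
This theorem is not proved in the present paper: it is quoted verbatim from Shen \cite{SH} as Theorem~0.5 there, and is used only as a black-box ingredient (to feed into Proposition~\ref{p1}). So there is no proof in this paper to compare your proposal against.

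That said, your sketch has a real gap at the top endpoint $p'=p_0$. The annular bound you write,
\[
\Bigl(\int_{R\leq|x-y|<2R}|K(x,y)|^{p_0}\,dy\Bigr)^{1/p_0}\leq C R^{-n/p_0'},
\]
is \emph{not} enough to dominate $|\nabla(-\Delta+V)^{-1/2}f|(x)$ pointwise by $M_{p_0'}f(x)$. Applying H\"older on each dyadic shell gives
\[
|Tf(x)|\leq C\sum_{k\in\Z}(2^k)^{-n/p_0'}\Bigl(\int_{|x-y|\sim 2^k}|f|^{p_0'}\Bigr)^{1/p_0'}
\leq C\sum_{k\in\Z} M_{p_0'}f(x),
\]
and the sum diverges. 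What makes Shen's argument go through is precisely the extra factor $\{1+m(x,V)|x-y|\}^{-k}$ in the kernel bound (cf.\ \eqref{e11}), which provides summable decay at large scales, together with a comparison to the classical Riesz transform (or an $L^2$ argument) at the scales $|x-y|\lesssim 1/m(x,V)$. Your proposal mentions ``the auxiliary-function machinery of Shen'' but then drops that decay from the estimate you actually use; it has to be kept.

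A second, smaller issue: for the weak-$(1,1)$ part you invoke an $H(m)$-type condition. For $T_3^*=\nabla(-\Delta+V)^{-1/2}$ the relevant smoothness is in the \emph{second} variable of its kernel, i.e.\ in the \emph{first} variable of $K_3$. That is indeed available---it is exactly \eqref{e12}---so this step can be made to work, but note the paper explicitly remarks (after Lemma~\ref{l6}) that smoothness in the other variable is \emph{not} known, so one must be careful about which operator one is running the Calder\'on--Zygmund argument on.
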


The basic idea in \cite{SH} is that, to exploit a pointwise estimate
of the kernel and the comparision to the kernel of classical Riesz
transform. Generally, it is based on the following two basic facts.
If $V$ is large, then one expects the kernel itself has a good
decay. On the other hand, if $V$ is small, then it is close to the
classical Riesz transform. In this paper, we adopt a different idea.
Since we know that the kernel do not satisfy the Calder\"on-Zygmund
estimate (\ref{e4}), we study how close it is. See Section 2.

We will show that the kernels have very good smoothness with respect
to the first variable of the following strong type. It is almost
(\ref{e4}). There exists a constant $C>0$ and $\delta>0$, such that,
for some $m>1$, $\forall\ l>0, \ x, x_0 \in \mathbb{R}^n$ with
$\abs{x-x_0} \leq l $, then
\begin{eqnarray}\label{e25}
\sum_{k=5}^{\infty} 2^{k\delta}(2^kl)^{\frac{n}{m'}}\brk{\int_{2^kl
\leq \abs{y-x_0} < 2^{k+1}l} \abs{K(x,y)-K(x_0,y)}^mdy}^{1/m}<C.
\end{eqnarray}

Recall that $T_1=(-\Delta+V)^{-1}V,\ T_2=(-\Delta+V)^{-1/2}V^{1/2}$,
and $T_3=(-\Delta+V)^{-1/2}\nabla$. Now we state our main results.

\begin{theorem}\label{t1}
Suppose $V\in B_{q}$ and $q\geq n/2$. Let $b\in \mathbf{BMO}$. Then,
we have\\
(i) If $q'\leq p < \infty$,
$$\normo{[b,T_1]f}_p \leq C_p \normo{b}_{\mathbf{BMO}}
\normo{f}_p;$$ (ii) If $(2q)'\leq p < \infty$,
$$\normo{[b,T_2] f}_p \leq C_{p}
\normo{b}_{\mathbf{BMO}}\|f\|_{p};$$ (iii) If $p_0'\leq p< \infty$,
let $(1/{p_0})=(1/q)-(1/n)$,
$$\normo{[b,T_3] f}_p \leq C_{p}
\normo{b}_{\mathbf{BMO}}\|f\|_{p}.$$
\end{theorem}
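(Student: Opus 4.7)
The plan is to prove Theorem \ref{t1} in a unified way by establishing a pointwise estimate for the Fefferman--Stein sharp maximal function $M^{\#}([b,T_j]f)$ in terms of Hardy--Littlewood maximal functions of $f$ and of $T_j f$, and then closing the argument using the $L^p$ bounds for $T_j$ from Theorems \ref{tA}--\ref{tC} together with the Fefferman--Stein inequality. This requires two preparatory steps: first, showing in Section 2 that each kernel $K_j$ of $T_j$ satisfies the smoothness condition $H(m)$ (in fact the stronger (\ref{e25})) for some $m>1$; second, a variant of Str\"omberg's commutator argument adapted so that only $H(m)$, rather than the pointwise CZ bound (\ref{e4}), is used.

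Concretely, I would fix a ball $B=B(x_0,l)$ and decompose
\begin{equation*}
[b,T_j]f = (b-b_B)T_j f - T_j((b-b_B)f\chi_{2B}) - T_j((b-b_B)f\chi_{(2B)^c}),
\end{equation*}
with subtracted constant $c_B = T_j((b-b_B)f\chi_{(2B)^c})(x_0)$. For a small $s>1$ I would estimate $\rev{|B|}\int_B |[b,T_j]f(x)-c_B|^{s}dx$ piece by piece. The first piece is handled by H\"older and John--Nirenberg, yielding a bound by $\normo{b}_{\mathbf{BMO}} M(|T_j f|^{s_1})^{1/s_1}(x_0)$ for some $s_1>1$. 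The local piece $T_j((b-b_B)f\chi_{2B})$ is controlled using the $L^{r_0}$ boundedness of $T_j$ from Theorems \ref{tA}--\ref{tC} applied to $(b-b_B)f\chi_{2B}$, which, after another use of John--Nirenberg and H\"older, is dominated by $\normo{b}_{\mathbf{BMO}} M(|f|^{t})^{1/t}(x_0)$ for a suitable $t$.

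The far piece is the one that forces the introduction of $H(m)$. I would write
\begin{equation*}
T_j((b-b_B)f\chi_{(2B)^c})(x)-c_B = \int_{(2B)^c}\big[K_j(x,y)-K_j(x_0,y)\big](b(y)-b_B)f(y)\,dy,
\end{equation*}
split $(2B)^c$ into annuli $A_k=\{2^kl\leq |y-x_0|<2^{k+1}l\}$ for $k\geq 1$, and apply H\"older with exponents $m,m'$ on each $A_k$. The $L^{m'}$ norm of $(b-b_B)f$ on $A_k$ is controlled, via $\|b-b_B\|_{L^r(A_k)}\les k\normo{b}_{\mathbf{BMO}}|A_k|^{1/r}$ and a further H\"older step with $r$ large, by $k\normo{b}_{\mathbf{BMO}}(2^k l)^{n/m'}M(|f|^{t})^{1/t}(x_0)$ for some $t$ slightly larger than $m'$. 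Summing over $k$ and invoking $H(m)$ on $K_j$ produces exactly the factor $\sum_k k(2^kl)^{n/m'}\brk{\int_{A_k}|K_j(x,y)-K_j(x_0,y)|^m dy}^{1/m}<\infty$, giving the desired bound $\normo{b}_{\mathbf{BMO}} M(|f|^{t})^{1/t}(x_0)$; the factor $k$ in $H(m)$ is precisely what absorbs the BMO logarithmic growth $|b_{2^kB}-b_B|\les k\normo{b}_{\mathbf{BMO}}$.

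Combining the three estimates yields a pointwise bound
\begin{equation*}
M^{\#}_{s}([b,T_j]f)(x)\les \normo{b}_{\mathbf{BMO}}\brk{M(|T_j f|^{s_1})^{1/s_1}(x) + M(|f|^{t})^{1/t}(x)}.
\end{equation*}
Taking $L^p$ norms, with $p$ in the range of (i)--(iii) and $s,s_1,t$ chosen so that $p>\max(s,s_1,t)$, the Fefferman--Stein inequality, the $L^{p/s_1}$ and $L^{p/t}$ boundedness of $M$, and Theorems \ref{tA}--\ref{tC} (used to replace $\normo{T_j f}_p$ by $\normo{f}_p$) close the argument in all three cases. The main technical obstacle I anticipate is not this commutator argument itself, which is essentially algebraic once $H(m)$ is in hand, but rather the verification of $H(m)$ (equivalently (\ref{e25})) for the three kernels $K_1,K_2,K_3$; this needs quantitative information about how $K_j(x,y)$ varies in $x$ that replaces the missing CZ bound (\ref{e4}), and should come from heat--kernel and resolvent estimates for $-\Delta+V$ combined with the self-improvement $V\in B_{q+\epsilon}$ recorded in Remark \ref{r1}.
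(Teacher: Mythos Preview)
Your proposal is correct and follows essentially the same route as the paper: a Str\"omberg-type sharp-function estimate for $[b,T_j]$ via the three-piece decomposition, with the far annular piece controlled precisely by the $H(m)$ condition on $K_j$, followed by Fefferman--Stein and the $L^p$ bounds for $T_j$. The paper formalizes this as a general Proposition (your commutator lemma) and then verifies, using the kernel estimates of Section~2, that $K_1$, $K_2$, $K_3$ satisfy $H(q)$, $H(2q)$, $H(p_0)$ respectively; the self-improvement $V\in B_{q+\epsilon}$ from Remark~\ref{r1} is used exactly as you anticipate to reach the closed endpoint $p=q'$, $(2q)'$, $p_0'$.
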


We know that $T_1^*=V(-\Delta+V)^{-1},\
T_2^*=V^{1/2}(-\Delta+V)^{-1/2}$, and
$T_3^*=-\nabla(-\Delta+V)^{-1/2}$. By duality we can easily get that
$$\normo{[b,T_1^*]f}_p \leq C_p \normo{b}_{\mathbf{BMO}} \normo{f}_p,\ 1<p\leq q,$$
$$\normo{[b,T_2^*] f}_p \leq C_{p} \normo{b}_{\mathbf{BMO}}\|f\|_{p},\ 1<p\leq 2q,$$
$$\normo{[b,T_3^*] f}_p \leq C_{p} \normo{b}_{\mathbf{BMO}}\|f\|_{p},\ 1<p\leq p_0.$$

From Theorem 1 (i), we can get the result concerning second order
Riesz transform. Let $T_4=(-\Delta+V)^{-1}\nabla^2$, then
$T_4^*=\nabla^2(-\Delta+V)^{-1}$. Indeed,
$T_{4}=(-\triangle+V)^{-1}\nabla^{2}=(-\triangle+V)^{-1}\triangle\triangle^{-1}\nabla^{2}=(I-(-\triangle+V)V)\frac{\nabla^{2}}{\triangle}=(I-T_{1})\frac{\nabla^{2}}{\triangle}$.
We have
\begin{corollary}\label{t9}
Suppose $V \in B_q$ and $ q \geq n/2$. Then
$$\normo{[b,T_4] f}_p \leq C_{p} \normo{b}_{\mathbf{BMO}}\|f\|_{p},\ q'\leq p <
\infty,$$ and
$$\normo{[b,T_4^*] f}_p \leq C_{p} \normo{b}_{\mathbf{BMO}}\|f\|_{p},\ 1< p \leq q.$$
\end{corollary}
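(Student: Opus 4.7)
The plan is to reduce everything to the commutator result already proved for $T_{1}$. Setting $R := \nabla^{2}/\Delta$ for the classical second-order Riesz transform, the identity supplied in the corollary, $T_{4} = (I - T_{1})R$, expresses $T_{4}$ as a difference of a standard Calder\'on--Zygmund operator and the composition $T_{1}R$. Since $R$ itself is Calder\'on--Zygmund, the classical theorem of Coifman--Rochberg--Weiss \cite{CRW} gives $\normo{[b,R]f}_{p} \les \normo{b}_{\mathbf{BMO}}\normo{f}_{p}$ for every $1<p<\infty$. The point of the corollary is that all lack of smoothness in the kernel of $T_{4}$ is already isolated in the factor $T_{1}$, whose commutator has been handled in Theorem~\ref{t1}(i).

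The main computation will be the Leibniz rule $[b,AB] = [b,A]B + A[b,B]$, which yields
\begin{equation*}
[b,T_{4}] \;=\; [b,R] \;-\; [b,T_{1}]\,R \;-\; T_{1}\,[b,R].
\end{equation*}
Fix $p$ with $q' \le p < \infty$. The first term is handled by CRW above. For the second, Theorem~\ref{t1}(i) gives $\normo{[b,T_{1}]g}_{p} \les \normo{b}_{\mathbf{BMO}}\normo{g}_{p}$ in this range, applied to $g = Rf$, and the $L^{p}$-boundedness of $R$ absorbs the loss. For the third, Theorem~\ref{tA} furnishes $T_{1}$ as a bounded operator on $L^{p}$ in the same range, so the factor $T_{1}$ can be pulled off after one more application of CRW. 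Summing the three bounds proves the first assertion.

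For the dual inequality, I would take adjoints to obtain $T_{4}^{*} = R^{*}(I - T_{1}^{*})$ and expand analogously,
\begin{equation*}
[b,T_{4}^{*}] \;=\; [b,R^{*}] \;-\; [b,R^{*}]\,T_{1}^{*} \;-\; R^{*}\,[b,T_{1}^{*}].
\end{equation*}
Since $R^{*}$ is again a classical Calder\'on--Zygmund operator, CRW controls $[b,R^{*}]$ on every $L^{p}$, while the dual bounds recorded immediately after Theorem~\ref{t1} give $T_{1}^{*}$ and $[b,T_{1}^{*}]$ bounded on $L^{p}$ in the range $1 < p \le q$.

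I do not anticipate any substantive obstacle; the corollary is essentially algebraic. The only point needing a line of care is to justify the factorization $T_{4} = (I-T_{1})R$ on a dense class, say $\mathcal{S}(\mathbb{R}^{n})$, so that the commutator manipulations are rigorous. This follows from inserting $-\Delta = (-\Delta+V) - V$ into $(-\Delta+V)^{-1}\nabla^{2}f$, exactly as sketched in the excerpt preceding the statement.
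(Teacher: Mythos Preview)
Your proof is correct and follows the paper's approach exactly: the paper only records the factorization $T_{4}=(I-T_{1})\,\nabla^{2}/\Delta$ and states the corollary without further argument, so your Leibniz-rule expansion together with Theorem~\ref{t1}(i), Theorem~\ref{tA}, and CRW is precisely the intended (and essentially only) way to finish. The one cosmetic point is a sign: since $(-\Delta+V)^{-1}\Delta = -(I-T_{1})$, the identity is really $T_{4}=-(I-T_{1})R$, but this has no effect on the commutator bounds.
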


For classical Riesz transform, the converse problem was also
considered in \cite{CRW}. This implies a new characterization of
$\mathbf{BMO}$. In this paper we also discuss the converse problem.
Namely, if $[b,T_3]$ is bounded on $L^2$, do we have $b \in
\mathbf{BMO}$? The answer is negative for general $V \in B_q$. It is
due to that, for some good $V$, the kernel of $T_3$ is better than
that of Riesz transform, which makes that the commutator can absorb
mild singularity. We give a counterexample for $V\equiv1$. On the
other hand, if imposing some integrability condition on $V$, we can
have the converse.

Throughout this paper, unless otherwise indicated, we will use $C$
and $c$ to denote constants, which are not necessarily the same at
each occurrence. By $A \sim B$, we mean that there exist constants
$C>0$ and $c>0$, such that $c\leq A/B\leq C$.

The paper is orgnized as following. In Section 2, we will give the
estimates of the kernels $K_{j}(j=1,2,3)$ of the operators $T_{j}$.
The proof of Theorem 1 is stated in Section 3. In Section 4, we
discuss the converse problem.

\section{Estimate of the kernels}
This section is devoted to give the estimate of the kernels
associated to $T_j\ (j=1,2,3)$ and denoted by $K_j(x,y)\ (j=1,2,3)$
respectively. Let $\Gamma(x,y,\tau)$ denote the fundamental solution
for the Schr\"odinger operator $-\Delta+(V(x)+i\tau),\ \tau \in
\mathbb{R}$, and $\Gamma_0(x,y,\tau)$ for the operator
$-\Delta+i\tau,\ \tau \in \mathbb{R}$. Clearly,
$\Gamma(x,y,\tau)=\Gamma(y,x,-\tau)$.

For $x\in \mathbb{R}^{n}$, the function $m(x,V)$ is defined by
$$\frac{1}{m(x,V)}=sup\ \{r>0: \frac{1}{r^{n-2}}\int{B(x,r)}{}V(y)dy\leq 1\}.$$
The function $m(x,V)$ reflects the scale of $V(x)$ essentially, but
behaves better. It is deeply studied in \cite{SH}, and will play a
crucial role in our proof. We list some properties of $m(x,V)$ here,
and their proof can be found in \cite{SH}.

\begin{lemma}[Lemma 1.4, \cite{SH}]\label{lA}
Assume $V \in B_q$ for some $q>n/2$, then
there exist $C>0,\ c>0,\ k_{0}>0$, such that, for any $x,\ y$ in $\mathbb{R}^{n}$, and $0<r<R<\infty$,\\
(a) $0<m(x,V)<\infty$, \\
(b) If $h=\frac{1}{m(x,V)}$, then
$\frac{1}{h^{n-2}}\int_{B(x,h)}V(y)dy=1$,\\
(c) $ m(x,V)\sim m(y,V)$, if $ |x-y|\leq \frac{C}{m(x,V)}$, \\
(d) $ m(y,V)\leq C{\{1+|x-y|m(x,V)\}}^{k_{0}}m(x,V)$,\\
(e) $ m(y,V)\geq cm(x,V)\{1+|x-y|m(x,V)\}^{-k_{0}/(1+k_{0})}$, \\
(f) $c\fbrk{1+\abs{x-y}m(y,V)}^{1/(k_0+1)}\leq
1+\abs{x-y}m(x,V)\leq C\fbrk{1+\abs{x-y}m(y,V)}^{k_0+1}$,\\
(g) $\rev{r^{n-2}}\int_{B(x,r)}V(y)dy\leq
C\brk{\frac{R}{r}}^{(n/q)-2}\cdot \rev{R^{n-2}}\int_{B(x,R)}V(y)dy.$
\end{lemma}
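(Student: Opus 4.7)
The plan is to build everything on property (g), which is a direct consequence of the reverse H\"older inequality (\ref{e1}), and then to bootstrap from there to the remaining properties using doubling and continuity.

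First I would prove (g). Given $r<R$, H\"older's inequality on $B(x,r)$ with exponents $q,q'$ yields $\int_{B(x,r)} V \leq |B(x,r)|^{1/q'}\bigl(\int_{B(x,r)} V^q\bigr)^{1/q}$. Enlarging the domain of the $L^q$ integral to $B(x,R)$ and applying the $B_q$ condition on $B(x,R)$ produces $\bigl(\int_{B(x,R)} V^q\bigr)^{1/q}\leq C\,|B(x,R)|^{1/q-1}\int_{B(x,R)} V$. Combining the two inequalities and dividing by $r^{n-2}$ gives (g) after matching the powers of $r$ and $R$.

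Properties (a) and (b) then follow from studying the function $\phi_x(r):=r^{-(n-2)}\int_{B(x,r)} V(y)\,dy$ on $(0,\infty)$. This function is continuous, and (g), in the form $\phi_x(r)\leq C(R/r)^{n/q-2}\phi_x(R)$ with exponent $n/q-2<0$ because $q>n/2$, forces $\phi_x(r)\to 0$ as $r\to 0^+$. Rearranged with $r$ fixed and $R\to\infty$, the same inequality forces $\phi_x(R)\to\infty$, provided some $\phi_x(r_0)$ is positive; this is guaranteed because $V\not\equiv 0$ and (\ref{e2}) propagates positivity to all large balls. By continuity and the intermediate value theorem, $h:=1/m(x,V)\in(0,\infty)$ and $\phi_x(h)=1$. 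For (c), if $|x-y|\leq C/m(x,V)=Ch$, then $B(x,h)\subset B(y,(C+1)h)$ and $B(y,h)\subset B(x,(C+1)h)$, so the doubling (\ref{e2}) and (g) force $\phi_y$ at scale comparable to $h$ to be comparable to $\phi_x(h)=1$, which traps $1/m(y,V)$ in a fixed multiplicative window around $h$.

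The main obstacle is the polynomial growth bound (d). The strategy is to join $x$ and $y$ by a chain of intermediate points along which (c) applies step by step, so that each hop multiplies $m$ by a bounded factor; the delicate point is that as $m(x_j,V)$ grows the hops shrink, so one must carefully balance the geometric growth of $m$ against the additive requirement that the hops sum to $|x-y|$. Using (g) and the $B_q$ doubling (\ref{e2}) to quantify this trade-off produces a chain of length polynomial in $|x-y|m(x,V)$, and exponentiating the per-step factor gives the power-law bound (d) with some $k_0$ depending only on $n$ and the $B_q$ constant. The lower bound (e) is then obtained by swapping the roles of $x$ and $y$ in (d) and solving for $m(y,V)$, and (f) is a purely algebraic reformulation of (d) and (e).
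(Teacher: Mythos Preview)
The paper does not prove this lemma; it is stated with the remark that the proof ``can be found in \cite{SH}.'' Your outline for (g), (a), (b), (c) follows the standard route and is correct, and (e), (f) do indeed follow formally from (d) as you say.

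The gap is in (d). You correctly flag the obstruction to a naive chain argument: if $m(x_j,V)$ is allowed to grow by a fixed factor $A>1$ at each hop of length $\sim 1/m(x_j,V)$, the hop lengths form a convergent geometric series and the chain may stall long before reaching $y$. Your proposed fix, however, does not close this. A chain of length polynomial in $|x-y|m(x,V)$ combined with a constant multiplicative loss per step would produce a bound \emph{exponential} in $|x-y|m(x,V)$, not the power law in (d); and you give no mechanism forcing the chain to terminate in controlled length without already assuming something like (d). The argument in \cite{SH} avoids chains altogether: with $h=1/m(x,V)$ and $R\sim |x-y|+h$, iterate the doubling inequality (\ref{e2}) from $B(x,h)$ up to $B(x,2R)\supset B(y,R)$ to obtain $\phi_y(R)\lesssim (R/h)^{\log_2 C_0-(n-2)}$, and then feed this into (g) in the form $\phi_y(\rho)\leq C(\rho/R)^{2-n/q}\phi_y(R)$ to see that $\phi_y(\rho)\leq 1$ whenever $\rho\leq c\,h\,(R/h)^{-k_0}$ for a suitable $k_0=k_0(n,q,C_0)$. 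This yields $1/m(y,V)\geq c\,h\,(1+|x-y|m(x,V))^{-k_0}$, which is exactly (d).
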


Estimating the kernels mainly relies on functional calculus and a
pointwise estimate of $\Gamma(x,y,\tau)$ that was given in
\cite{SH}.

\begin{theorem}[Theorem 2.7, \cite{SH}]\label{tD}
Suppose $V \in B_{n/2}$. Then, for any $x,y\ \in \mathbb{R}^n,\ \tau
\in \mathbb{R}$, and integer $ k>0$,
$$\Gamma(x,y,\tau)\leq \frac{C_k}{\fbrk{1+\abs{\tau}^{1/2}\abs{x-y}}^k\fbrk{1+m(x,V)\abs{x-y}}^k}\cdot\rev{\abs{x-y}^{n-2}}.$$
where $C_k$ is a constant independent of $x,y,\tau$.
\end{theorem}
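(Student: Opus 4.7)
The plan is to decouple the two decay factors and then recombine them. The factor $(1+|\tau|^{1/2}|x-y|)^{-k}$ is the ``free'' decay of $-\Delta+i\tau$: its fundamental solution $\Gamma_0(x,y,\tau)$ is an explicit Bessel-type kernel satisfying
$$\Gamma_0(x,y,\tau)\leq \frac{C_k}{(1+|\tau|^{1/2}|x-y|)^k}\cdot\frac{1}{|x-y|^{n-2}}$$
for every integer $k\geq 0$. Since $V\geq 0$ and $i\tau$ is purely imaginary, Kato's inequality yields $(-\Delta+V)|\Gamma(\cdot,y,\tau)|\leq 0$ away from the pole $y$: using $\Delta\Gamma=(V+i\tau)\Gamma$ together with $\overline{\sgn(\Gamma)}\,\Gamma=|\Gamma|$, the imaginary part drops out and only the nonnegative contribution $V|\Gamma|$ remains. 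This reduces the problem to controlling a real nonnegative subsolution of $-\Delta+V$ whose initial size is governed by $\Gamma_0(x,y,\tau)$.

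For the factor $(1+m(x,V)|x-y|)^{-k}$, the case $|x-y|\leq C/m(x,V)$ is trivial because the factor is $O(1)$, and direct comparison with $\Gamma_0$ suffices. For $|x-y|\gg 1/m(x,V)$ I would carry out an Agmon-type iteration along chains of balls. The Fefferman--Phong inequality, available because $V\in B_{n/2}$, gives the coercivity $r^{2-n}\int_{B(x_0,r)}V\,dy\gtrsim 1$ as soon as $r\geq c/m(x_0,V)$. Combined with Caccioppoli and sub-mean-value estimates on the subsolution $|\Gamma|$, this produces
$$\sup_{B(x_0,r)}|\Gamma|\leq \theta\sup_{B(x_0,2r)}|\Gamma|$$
with $\theta<1$ independent of $\tau$. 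Chaining $N\sim m(x,V)|x-y|$ such balls of radius $\sim 1/m(x,V)$ along a path from a neighborhood of $x$ toward $y$, and using Lemma \ref{lA}(c),(f) to ensure $m(\cdot,V)$ is comparable along the chain, yields geometric decay $\theta^N$, which dominates the claimed polynomial rate $(1+m(x,V)|x-y|)^{-k}$ for every $k$.

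The main obstacle is the complex-valued nature of $\Gamma$, which obstructs a direct scalar maximum principle; the Kato-inequality step is the crucial device converting the problem into a real, nonnegative subsolution problem for $-\Delta+V$. A secondary subtlety is gluing the two decay factors together: the initial $L^\infty$ bound from which the chain argument starts must already carry the $|\tau|^{1/2}|x-y|$-decay. I would arrange this by starting the iteration at a ball around $x$ of radius $\sim 1/m(x,V)$ and bounding $|\Gamma|$ there via the resolvent identity $\Gamma=\Gamma_0-\Gamma_0 V\Gamma$, using $V\geq 0$ to ensure no sign is lost. Multiplying the resulting two bounds, and invoking Lemma \ref{lA}(c),(f) to swap $m(x,V)$ with $m(x_0,V)$ at the cost of absolute constants when $|x-x_0|\lesssim 1/m(x,V)$, produces the claimed pointwise estimate.
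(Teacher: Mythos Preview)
The paper does not prove this statement; it is quoted verbatim from Shen \cite{SH} (Theorem~2.7 there) and used as a black box in the kernel estimates of Section~2. So there is no ``paper's own proof'' to compare against; the relevant comparison is with Shen's original argument.

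Your sketch is in the right spirit and tracks Shen's method closely: Kato's inequality to pass from the complex-valued $\Gamma$ to a nonnegative subsolution of $-\Delta+V$, the Caccioppoli inequality combined with the Fefferman--Phong lower bound $r^{2-n}\int_{B(x_0,r)}V\gtrsim 1$ for $r\gtrsim 1/m(x_0,V)$, and an iteration producing polynomial decay in $m(x,V)|x-y|$. The $\tau$-decay via comparison with the explicit free resolvent $\Gamma_0$ is also how Shen proceeds. One point deserves care: your ``chain of $N\sim m(x,V)|x-y|$ balls of radius $\sim 1/m(x,V)$'' with $m(\cdot,V)$ kept comparable along the chain via Lemma~\ref{lA}(c) is delicate, because (c) only gives a fixed constant per step, and after $N$ steps this compounds to $C^N$, destroying the gain. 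Shen's iteration avoids this by working on \emph{concentric} dyadic balls $B(x_0,2^jR)$ about a fixed center, so that $m(x_0,V)$ never changes and the Fefferman--Phong bound is applied at a single point with growing radii; the decay then comes from the growth of $r^{2-n}\int_{B(x_0,r)}V$ in $r$ (cf.\ Lemma~\ref{lA}(g) and Shen's Lemma~1.8). If you reorganize the iteration that way, the remaining steps you outline go through essentially as in \cite{SH}.
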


The next lemma is used to control the integration of $V$ on a ball.

\begin{lemma}\label{l2} Suppose $V \in B_q$ for some $q>n/2$. Let $N>
\log_2C_0+1$, where $C_0$ is the constant in (\ref{e2}). Then for
any $x_0 \in \mathbb{R}^n$, $R>0$,
$$\rev{\fbrk{1+m(x_0,V)R}^N}\int_{B(x_0,R)}
V(\xi)d\xi \leq CR^{n-2}.$$
\end{lemma}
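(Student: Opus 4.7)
The plan is to split the argument into two regimes separated by the critical scale $h := 1/m(x_0,V)$, which by Lemma~\ref{lA}(b) is exactly the radius at which $h^{-(n-2)}\int_{B(x_0,h)}V = 1$. This threshold is the natural one because the factor $1 + m(x_0,V)R = 1 + R/h$ changes character depending on whether $R \le h$ or $R > h$.

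In the small-scale case $R \le h$, I would note that $(1 + m(x_0,V)R)^N \le 2^N$, so it suffices to prove $\int_{B(x_0,R)} V \le CR^{n-2}$. Apply Lemma~\ref{lA}(g) with small radius $R$ and large radius $h$: this gives
\[
\frac{1}{R^{n-2}} \int_{B(x_0,R)} V \le C\,(h/R)^{n/q - 2}\cdot \frac{1}{h^{n-2}}\int_{B(x_0,h)} V = C\,(h/R)^{n/q - 2}.
\]
Since $q > n/2$ the exponent $n/q - 2$ is negative and $h/R \ge 1$, so the right-hand side is bounded by an absolute constant, finishing this case.

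In the large-scale case $R > h$, I would pick the unique integer $k \ge 1$ with $2^{k-1}h < R \le 2^k h$ and iterate the doubling estimate (\ref{e2}) exactly $k$ times on concentric balls, obtaining
\[
\int_{B(x_0,R)} V \;\le\; \int_{B(x_0,2^k h)} V \;\le\; C_0^{\,k}\int_{B(x_0,h)} V \;=\; C_0^{\,k}\, h^{n-2}.
\]
At the same time $1 + m(x_0,V)R > R/h > 2^{k-1}$, so the denominator $(1 + m(x_0,V)R)^N$ dominates $2^{(k-1)N}$. Dividing and writing $C_0^{\,k}/2^{kN} = 2^{k(\log_2 C_0 - N)}$, the hypothesis $N > \log_2 C_0 + 1$ makes this exponent strictly less than $-1$, so the combined $k$-dependent prefactor is at most a constant times $2^{-k(n-1)}$ (any bound of the form $2^{-k\alpha}$ with $\alpha \ge n-2$ would suffice). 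Finally, converting the leftover $h^{n-2}$ via $h \le 2^{-(k-1)}R$ yields the desired bound $CR^{n-2}$.

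The proof is essentially bookkeeping of exponents, and the only place any care is required is in Case 2, where three quantities have to be balanced: the doubling growth $C_0^{\,k}$ upward, the denominator decay $2^{-kN}$ downward, and the scale conversion between $h^{n-2}$ and $R^{n-2}$. The hypothesis $N > \log_2 C_0 + 1$ is tailored precisely so that the geometric factors combine to a convergent series with room to spare, so there is no genuine conceptual obstacle beyond choosing the two-case split at the scale $h = 1/m(x_0,V)$.
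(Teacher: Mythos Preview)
Your approach matches the paper's: both split at the critical scale $h = 1/m(x_0,V)$ (the paper parametrises this via an integer $j_0$ with $2^{j_0}R \le h < 2^{j_0+1}R$, but that is cosmetic), invoke Lemma~\ref{lA}(b),(g) when $R \le h$, and iterate the doubling estimate~(\ref{e2}) together with Lemma~\ref{lA}(b) when $R > h$.

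One small correction in your large-scale case: from $N > \log_2 C_0 + 1$ you only get $C_0^{\,k}/2^{kN} \le C\,2^{-k}$, not $C\,2^{-k(n-1)}$, so that intermediate claim is unjustified. Fortunately no stronger decay is needed --- since $h < R$ in this regime one has $h^{n-2} \le R^{n-2}$ immediately, so any $\alpha \ge 0$ already closes the estimate (and your final conversion via $h \le 2^{-(k-1)}R$ is correct on its own and only sharpens this). The parenthetical requirement ``$\alpha \ge n-2$'' is therefore spurious as well.
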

\begin{proof}
There exists a integer $j_0\in \mathbb{Z}$ such that
$2^{j_0}R\leq\rev{m(x_0,V)}<2^{j_0+1}R$. We will discuss in
following two cases. \\
Case 1: $j_0<0$. By (\ref{e2}), Lemma \ref{l2}, and (b) of Lemma
\ref{lA}, we can get
\begin{eqnarray*}
&&\rev{\fbrk{1+m(x_0,V)R}^N}\int_{B(x_0,R)} V(\xi)d\xi\\
&\leq&\rev{(2^{-j_0})^N} \int_{B(x_0,R)}V(\xi)d\xi\\
&\leq&\rev{\fbrk{2^{-j_0}}^N}C_0^{-j_0}(2^{j_0}R)^{n-2}\\
&\leq&R^{n-2} \qquad (since\ N >\log_2C_0).
\end{eqnarray*}
Case 2: $j_0\geq 0$. By (b) and (g) of Lemma \ref{lA}, we can get
\begin{eqnarray*}
&&\rev{\fbrk{1+m(x_0,V)R}^N}\int_{B(x_0,R)} V(\xi)d\xi\\
&\leq& \int_{B(x_0,R)}V(\xi)d\xi\\
&\leq&R^{n-2} \rev{R^{n-2}}\int_{B(x_0,R)}V(\xi)d\xi\\
&\leq&R^{n-2}.
\end{eqnarray*}
This completes the proof of Lemma \ref{l2}.
\end{proof}

Before giving the estimate of the kernels, we still needs one lemma,
which is proved in \cite{SH}.

\begin{lemma}[Lemma 4.6, \cite{SH}]\label{lB}
Suppose $V \in B_{q_0}$, $q_0>1$. Assume that $-\Delta
u+(V(x)+i\tau)u=0$ in $B(x_0,2R)$ for some $x_0 \in \mathbb{R}^n,\ R>0$. Then\\
(a) for $x \in B(x_0,R)$,$$\abs{\nabla u(x)}\leq
C\sup_{B(x_0,2R)}\abs{u}\cdot
\int_{B(x_0,2R)}\frac{V(y)}{\abs{x-y}^{n-1}}dy+\frac{C}{R^{n+1}}\int_{B(x_0,2R)}\abs{u(y)}dy,$$
(b) if $(n/2)<q_0<n$, let $(1/t)=(1/q_{0})-(1/n)$,
$k_0>\log_2{C_0}+1$
$$\brk{\int_{B(x_0,R)}\abs{\nabla u}^tdx}^{1/t}\leq CR^{(n/{q_0})-2}\fbrk{1+Rm(x_0,V)}^{k_0}\sup_{B(x_0,2R)}\abs{u}.$$
\end{lemma}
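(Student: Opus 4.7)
My plan is a localization-and-representation argument for part (a), followed by a Riesz-potential plus reverse-H\"older bookkeeping for part (b). Pick a cutoff $\eta\in C_c^\infty(B(x_0,2R))$ with $\eta\equiv 1$ on $B(x_0,7R/4)$, $|\nabla\eta|\lesssim R^{-1}$ and $|\Delta\eta|\lesssim R^{-2}$, and set $v=\eta u$. Since $v$ has compact support, $v=N_0\ast(-\Delta v)$ with $N_0(z)=c_n|z|^{2-n}$ the Newtonian potential. Using $\Delta u=(V+i\tau)u$ on $B(x_0,2R)$,
$$-\Delta v=-\eta(V+i\tau)u-2\nabla\eta\cdot\nabla u-u\,\Delta\eta,$$
and an integration by parts in $y$ (which is free of boundary terms because $\nabla\eta$ is compactly supported) transfers the derivative off $\nabla u$ via the identity $\int N_0(x-y)\nabla\eta(y)\cdot\nabla u(y)\,dy=-\int[\nabla_yN_0(x-y)\cdot\nabla\eta(y)+N_0(x-y)\Delta\eta(y)]u(y)\,dy$. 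Consequently, for $x\in B(x_0,R)$,
$$u(x)=\int N_0(x-y)u(y)\Delta\eta(y)\,dy+2\int\nabla_yN_0(x-y)\cdot\nabla\eta(y)u(y)\,dy-\int N_0(x-y)\eta(y)(V+i\tau)u(y)\,dy.$$
Differentiating in $x$: the two pieces carrying $\nabla\eta$ or $\Delta\eta$ are supported at $|x-y|\gtrsim R$, so using $|\nabla_xN_0|\lesssim|x-y|^{1-n}$ and $|\nabla_x\nabla_yN_0|\lesssim|x-y|^{-n}$ they contribute at most $CR^{-n-1}\int_{B(x_0,2R)}|u|$; the potential piece is dominated by $\sup|u|\int_{B(x_0,2R)}V(y)|x-y|^{1-n}\,dy$, giving (a).

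For (b), take $L^t(B(x_0,R))$ norms in (a). The boundary piece gives $R^{n/t-n-1}\int_{B(x_0,2R)}|u|\lesssim R^{n/t-1}\sup|u|=R^{n/q_0-2}\sup|u|$ since $1/t=1/q_0-1/n$. The potential piece equals $\sup|u|\cdot\|I_1(V\chi_{B(x_0,2R)})\|_{L^t}$ where $I_1$ is the order-$1$ Riesz potential; Hardy--Littlewood--Sobolev gives $\|I_1(V\chi)\|_{L^t}\lesssim\|V\|_{L^{q_0}(B(x_0,2R))}$ precisely because of the scaling $1/t=1/q_0-1/n$. The hypothesis $V\in B_{q_0}$ produces $\|V\|_{L^{q_0}(B(x_0,2R))}\le CR^{n/q_0-n}\int_{B(x_0,2R)}V$, and Lemma~\ref{l2} (applied with $N=k_0>\log_2C_0+1$) upgrades this to $\int_{B(x_0,2R)}V\le CR^{n-2}\{1+Rm(x_0,V)\}^{k_0}$. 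Chaining the three inequalities yields the claimed $CR^{n/q_0-2}\{1+Rm(x_0,V)\}^{k_0}\sup|u|$, with both terms balanced at the common exponent $R^{n/q_0-2}$.

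The main obstacle is the $i\tau$ part of the potential term in (a): a naive bound introduces an extra $|\tau|\int_{B(x_0,2R)}|x-y|^{1-n}dy\sim|\tau|R$ that is absent from the stated right-hand side, so one must either restrict attention to the natural regime $|\tau|\lesssim R^{-2}$ (which is automatic on the scales used by functional calculus against $m(x_0,V)^2$) or carry the extra $|\tau|$ term through (b) and verify its absorption at the point of application. Once this bookkeeping is fixed, both statements are routine consequences of the Newtonian representation, Hardy--Littlewood--Sobolev, and the size/doubling properties of $V$ encoded by $m(x,V)$.
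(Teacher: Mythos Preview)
The paper does not supply its own proof of this lemma; it is quoted verbatim from Shen \cite{SH} (see the sentence ``which is proved in \cite{SH}'' immediately preceding the statement). So there is no in-paper proof to compare against, and I will simply assess your argument on its own.

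Your localization-and-representation strategy is the right one, but the difficulty you flag at the end is not a bookkeeping nuisance to be ``fixed'' later: it is a symptom of having chosen the wrong parametrix. If you convolve with the Newtonian kernel $N_0$ you are inverting $-\Delta$, so the source term for $v=\eta u$ is $-\Delta v = \eta(V+i\tau)u + (\text{commutator terms})$, and the $i\tau$ piece genuinely appears with no compensating decay. The estimate in (a) as stated is \emph{uniform in} $\tau$, and this uniformity is essential for the applications in Section~2 (the kernel estimates integrate over all $\tau\in\mathbb{R}$ via functional calculus). Restricting to $|\tau|\lesssim R^{-2}$ is not an option.

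The remedy is to replace $N_0$ by $\Gamma_0(\cdot,\cdot,\tau)$, the fundamental solution of $-\Delta+i\tau$ (already introduced at the top of Section~2). Then $(-\Delta+i\tau)v = -\eta Vu - 2\nabla\eta\cdot\nabla u - u\,\Delta\eta$, so the $i\tau$ term cancels exactly. One has the uniform bounds $|\Gamma_0(x,y,\tau)|\le C|x-y|^{2-n}$ and $|\nabla_x\Gamma_0(x,y,\tau)|\le C|x-y|^{1-n}$ (in fact with extra decay in $|\tau|^{1/2}|x-y|$), and the second-order kernel $\nabla_x\nabla_y\Gamma_0$ acting on the piece supported where $|x-y|\gtrsim R$ is harmless. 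With this single change your argument for (a) goes through verbatim and yields the stated $\tau$-free bound. Your derivation of (b) from (a) via Hardy--Littlewood--Sobolev, the reverse H\"older inequality, and Lemma~\ref{l2} is correct as written.
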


Now we are ready to give the estimate of the kernels.

\begin{lemma}\label{l4}
Suppose $V\in B_q$ for some $q>n/2$. Then, there exists $\delta>0$
and for any integer $k>0$, $0<h<\abs{x-y}/16$,
\begin{eqnarray}
\abs{K_1(x,y)}&\leq& \frac{C_k}{\fbrk{1+m(x,V)\abs{x-y}}^k}\cdot
\rev{\abs{x-y}^{n-2}}V(y), \label{e7}\\
\abs{K_1(x+h,y)-K_1(x,y)} &\leq&
\frac{C_k}{\fbrk{1+m(x,V)\abs{x-y}}^k}\cdot
\frac{\abs{h}^{\delta}}{\abs{x-y}^{n-2+\delta}}V(y).\label{e8}
\end{eqnarray}
\end{lemma}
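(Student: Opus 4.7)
My plan is to identify the kernel as $K_1(x,y)=\Gamma(x,y,0)V(y)$, which follows from $T_1f(x)=(-\Delta+V)^{-1}(Vf)(x)=\int\Gamma(x,y,0)V(y)f(y)\,dy$. Estimate (\ref{e7}) is then an immediate consequence of Theorem \ref{tD} at $\tau=0$. For (\ref{e8}) I would use interior regularity of the Schr\"odinger operator applied to $u(z):=\Gamma(z,y,0)$: setting $r=\abs{x-y}/8$, the hypothesis $\abs{h}<\abs{x-y}/16$ puts both $x$ and $x+h$ into $B(x,r)$, while $y\notin\overline{B(x,2r)}$, so $-\Delta u+Vu=0$ in $B(x,2r)$.

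By Remark \ref{r1} fix $q_0$ with $n/2<q_0<\min(q,n)$, so that $V\in B_{q_0}$; set $1/t=1/q_0-1/n$ (hence $t>n$) and $\delta:=1-n/t=2-n/q_0>0$. Lemma \ref{lB}(b) applied to $u$ on $B(x,r)$ gives
$$\brk{\int_{B(x,r)}\abs{\nabla u}^{t}}^{1/t}\leq Cr^{n/q_0-2}\fbrk{1+rm(x,V)}^{k_0}\sup_{B(x,2r)}\abs{u},$$
and since $t>n$, Morrey's embedding on $B(x,r)$ yields
$$\abs{u(x+h)-u(x)}\leq C\abs{h}^{\delta}\brk{\int_{B(x,r)}\abs{\nabla u}^{t}}^{1/t}.$$
To control $\sup_{B(x,2r)}\abs{u}$ I would invoke Theorem \ref{tD} at $\tau=0$ for each $z\in B(x,2r)$, noting $\abs{z-y}\sim\abs{x-y}\sim r$ and using Lemma \ref{lA}(f) to compare $m(z,V)$ with $m(x,V)$. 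Taking the exponent in Theorem \ref{tD} sufficiently large to absorb both the $(k_0+1)$-power loss from (f) and the factor $\fbrk{1+rm(x,V)}^{k_0}$, one gets, for any prescribed $k$,
$$\sup_{B(x,2r)}\abs{u}\leq \frac{C_k}{\fbrk{1+m(x,V)\abs{x-y}}^{k+k_0}\abs{x-y}^{n-2}}.$$
Multiplying the three displays, the factor $\fbrk{1+rm(x,V)}^{k_0}$ cancels against $\fbrk{1+m(x,V)\abs{x-y}}^{-k_0}$ (since $r\sim\abs{x-y}$), the powers of $\abs{x-y}$ combine to $\abs{x-y}^{n/q_0-n}=\abs{x-y}^{-(n-2+\delta)}$, and multiplying by $V(y)$ delivers (\ref{e8}).

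The principal difficulty is the bookkeeping in the last step: Lemma \ref{lB}(b) introduces a positive power $\fbrk{1+rm(x,V)}^{k_0}$, and Theorem \ref{tD} naturally expresses decay in terms of $m(z,V)$ rather than $m(x,V)$. Both losses must be absorbed by the arbitrary-order decay in Theorem \ref{tD}, exploiting Lemma \ref{lA}(f) to switch base points at the cost of a polynomial worsening of the rate. Once $k$ is taken sufficiently large, everything consolidates into the clean bounds (\ref{e7}) and (\ref{e8}), with $\delta=2-n/q_0$ depending only on $V$.
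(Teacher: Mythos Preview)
Your proposal is correct and follows essentially the same route as the paper: identify $K_1(x,y)=\Gamma(x,y,0)V(y)$, read off (\ref{e7}) from Theorem \ref{tD}, and for (\ref{e8}) combine Morrey's embedding with Lemma \ref{lB}(b) on $B(x,\abs{x-y}/8)$, then bound $\sup_{B(x,2R)}\abs{\Gamma(\cdot,y,0)}$ via Theorem \ref{tD} and absorb the $\{1+Rm(x,V)\}^{k_0}$ loss by taking the exponent large and invoking Lemma \ref{lA}(f). The only cosmetic difference is that the paper applies Theorem \ref{tD} with base point $y$ (using $\Gamma(z,y,0)=\Gamma(y,z,0)$) so that $m(y,V)$ is fixed and the supremum over $z$ is immediate, whereas you use $m(z,V)$ and then transfer to $m(x,V)$; both yield the same bound.
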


\begin{lemma}\label{l5}
Suppose $V\in B_q$ for some $q>n/2$. Then, there exists $\delta>0$
and for any integer $k>0$, $0<h<\abs{x-y}/16$,
\begin{eqnarray}
\abs{K_2(x,y)}&\leq& \frac{C_k}{\fbrk{1+m(x,V)\abs{x-y}}^k}\cdot
\rev{\abs{x-y}^{n-1}}V(y)^{1/2}, \label{e9}\\
\abs{K_2(x+h,y)-K_2(x,y)}&\leq&
\frac{C_k}{\fbrk{1+m(y,V)\abs{x-y}}^k}\cdot
\frac{\abs{h}^{\delta}}{\abs{x-y}^{n-1+\delta}}V(y)^{1/2}.\label{e10}
\end{eqnarray}
\end{lemma}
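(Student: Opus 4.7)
The plan is to handle $K_2$ via a functional calculus representation in terms of the fundamental solutions $\Gamma(x,y,\tau)$ already estimated in Theorem \ref{tD} and Lemma \ref{lB}, in close parallel to the proof of Lemma \ref{l4}. Since $T_2=(-\Delta+V)^{-1/2}V^{1/2}$, I write $K_2(x,y)=\Phi(x,y)V(y)^{1/2}$, where $\Phi(x,y)$ is the kernel of $(-\Delta+V)^{-1/2}$. Using the spectral formula
$$(-\Delta+V)^{-1/2}=\frac{1}{\pi}\int_0^{\infty}s^{-1/2}(-\Delta+V+s)^{-1}\,ds$$
(or its complex-contour version that replaces $s$ by $i\tau$ and involves $\Gamma(x,y,\tau)$), I express $\Phi(x,y)$ as an integral of fundamental solutions weighted by a factor of order $|\tau|^{-1/2}$.

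For the size estimate (\ref{e9}), I would substitute the pointwise bound of Theorem \ref{tD} (with $k$ large enough) into the above integral representation. The change of variables $u=|\tau|^{1/2}|x-y|$ shows that the $\tau$-integration of $|\tau|^{-1/2}\bigl(1+|\tau|^{1/2}|x-y|\bigr)^{-k}$ contributes exactly $C/|x-y|$, i.e.\ upgrades the $|x-y|^{-(n-2)}$ in the $\Gamma$-bound to $|x-y|^{-(n-1)}$. The factor $(1+m(x,V)|x-y|)^{-k}$ is preserved throughout, and multiplying by $V(y)^{1/2}$ gives (\ref{e9}).

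For the smoothness estimate (\ref{e10}), the starting point is
$$K_2(x+h,y)-K_2(x,y)=V(y)^{1/2}\int w(\tau)\bigl[\Gamma(x+h,y,\tau)-\Gamma(x,y,\tau)\bigr]d\tau .$$
On $B(x,|x-y|/8)$, the function $u(\cdot)=\Gamma(\cdot,y,\tau)$ solves $-\Delta u+(V+i\tau)u=0$, so Lemma \ref{lB}(a) produces, for $z$ on the segment from $x$ to $x+h$,
$$|\nabla_z\Gamma(z,y,\tau)|\le C\sup_B|\Gamma|\cdot\int_B\frac{V(\xi)\,d\xi}{|z-\xi|^{n-1}}+\frac{C}{R^{n+1}}\int_B|\Gamma|,\qquad R\sim|x-y|.$$
Combining with Theorem \ref{tD}, integrating in $\tau$ against $|\tau|^{-1/2}$, applying the mean value theorem to get a bound of order $|h|$, and finally interpolating against the trivial $|h|^{0}$-bound from (\ref{e9}) converts the gradient estimate into a Hölder-type one with some $\delta\in(0,1)$. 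Property (f) of Lemma \ref{lA} lets me swap $m(x,V)$ for $m(y,V)$ at the cost of adjusting $k$, producing the decay factor $(1+m(y,V)|x-y|)^{-k}$ displayed in (\ref{e10}).

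The main obstacle is the control of the fractional integral $\int_{B(z,R)}V(\xi)|z-\xi|^{-(n-1)}d\xi$. Because $V$ has no smoothness and we only have $q>n/2$ (not $q>n$), this integral is not directly handled by a single application of Hölder's inequality with $B_q$. The decisive step is to exploit the $B_{q+\epsilon}$ self-improvement of Remark \ref{r1} jointly with Lemma \ref{l2}: a dyadic decomposition around $z$ combined with the reverse-Hölder inequality yields a bound of the form $CR^{\delta}(1+m(x,V)R)^{N}$ for some $\delta>0$, and it is this extra $R^{\delta}$ which is absorbed into the $|h|^{\delta}|x-y|^{-(n-1+\delta)}$ factor in (\ref{e10}).
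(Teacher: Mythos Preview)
Your treatment of the size bound (\ref{e9}) matches the paper: both use the functional calculus representation via $\Gamma(x,y,\tau)$, insert the pointwise bound of Theorem \ref{tD}, and observe that $\int_{\mathbb{R}}|\tau|^{-1/2}(1+|\tau|^{1/2}|x-y|)^{-k}\,d\tau\le C_k/|x-y|$.

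For the H\"older bound (\ref{e10}), however, your approach has a genuine gap. Using Lemma \ref{lB}(a) to obtain a \emph{pointwise} gradient estimate produces the factor $\int_{B}V(\xi)|z-\xi|^{-(n-1)}\,d\xi$ with $z$ on the segment from $x$ to $x+h$, and your ``decisive step'' asserts this is controlled by $CR^{\delta}(1+m(x,V)R)^{N}$. That claim fails when $n/2<q<n$. A dyadic decomposition around $z$ together with Lemma \ref{lA}(g) gives
\[
\int_{B(z,R)}\frac{V(\xi)}{|z-\xi|^{n-1}}\,d\xi\;\lesssim\; \frac{1}{R^{n-1}}\Bigl(\int_{B(z,R)}V\Bigr)\sum_{j\ge0}2^{\,j(n/q-1)},
\]
and the geometric series diverges precisely when $q<n$; the $B_{q+\epsilon}$ self-improvement of Remark \ref{r1} does not close the gap from $q>n/2$ to $q>n$. (For a concrete example, $V(\xi)=|\xi|^{-\alpha}$ with $1<\alpha<2$ lies in $B_q$ for some $n/2<q<n$ but makes the integral infinite at $z=0$.) Consequently the mean-value step yields no finite bound of order $|h|$, and there is nothing to interpolate against the $|h|^{0}$ estimate.

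The paper bypasses this entirely by using part (b) of Lemma \ref{lB} rather than part (a). Fixing $n/2<q_0<\min(n,q)$ and setting $1/t=1/q_0-1/n$, one has the $L^t$ gradient bound
\[
\Bigl(\int_{B(x,R)}|\nabla_z\Gamma(z,y,\tau)|^t\,dz\Bigr)^{1/t}\le CR^{n/q_0-2}\{1+Rm(x,V)\}^{k_0}\sup_{B(x,2R)}|\Gamma(\cdot,y,\tau)|.
\]
Morrey's embedding theorem (with $\delta=1-n/t>0$) then converts this $L^t$ control directly into
\[
|\Gamma(x+h,y,\tau)-\Gamma(x,y,\tau)|\le C_k\,\frac{|h|^{\delta}}{|x-y|^{\delta}}\cdot\frac{(1+|\tau|^{1/2}|x-y|)^{-k}}{(1+m(y,V)|x-y|)^{k}}\cdot\frac{1}{|x-y|^{n-2}},
\]
never invoking a pointwise gradient bound or the fractional integral of $V$. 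Integrating in $\tau$ against $|\tau|^{-1/2}$ then gives (\ref{e10}) exactly as for (\ref{e9}).
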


\begin{lemma}\label{l6}
Suppose $V\in B_q$ for some $n/2<q<n$. Then, there exists $\delta>0$
and for any integer $k>0$, $0<h<\abs{x-y}/16$,
\begin{eqnarray}
&&\abs{K_3(x,y)} \nonumber \\
&\leq& \frac{C_k}{\fbrk{1+m(x,V)\abs{x-y}}^k}
\frac{1}{\abs{x-y}^{n-1}}\cdot \brk{\int_{B(y,\abs{x-y})}\frac{V(\xi)}{\abs{y-\xi}^{n-1}}d\xi+\rev{\abs{x-y}}}\label{e11},\\
&&\abs{K_3(x+h,y)-K_3(x,y)} \nonumber \\
&\leq& \frac{C_k}{\fbrk{1+m(x,V)\abs{x-y}}^k}
\frac{\abs{h}^{\delta}}{\abs{x-y}^{n-1+\delta}}\cdot
\brk{\int_{B(y,\abs{x-y})}\frac{V(\xi)}{\abs{y-\xi}^{n-1}}d\xi+\rev{\abs{x-y}}}.\label{e12}
\end{eqnarray}
\end{lemma}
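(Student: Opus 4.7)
The plan is to parallel the proofs one expects for Lemmas~\ref{l4}--\ref{l5}, reducing everything to pointwise estimates on $\Gamma(x,y,\tau)$ (and its derivatives) coupled with a $\tau$-integration. Concretely, I would start from a Cauchy-integral functional-calculus representation expressing $(-\Delta+V)^{-1/2}$ as an integral of the shifted resolvents $(-\Delta+V+i\tau)^{-1}$ against a weight of size $|\tau|^{-1/2}$; at the kernel level this gives
\[
K_3(x,y)=c\int_{\R}w(\tau)\,\nabla_y\Gamma(x,y,\tau)\,d\tau,\qquad|w(\tau)|\les|\tau|^{-1/2}.
\]
Then both (\ref{e11}) and (\ref{e12}) reduce to a pointwise bound on $\nabla_y\Gamma(x,y,\tau)$ (respectively on its $x$-H\"older differences), combined with the elementary estimate $\int_{\R}(1+|\tau|^{1/2}|x-y|)^{-k}|\tau|^{-1/2}d\tau\les|x-y|^{-1}$, obtained via the substitution $s=|\tau|^{1/2}|x-y|$ (valid once $k\ge2$).

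For (\ref{e11}), the main input is Lemma~\ref{lB}(a) applied to $u(z)=\Gamma(x,z,\tau)$, which solves $-\Delta_z u+(V(z)+i\tau)u=0$ on $B(y,|x-y|/2)$ because $x$ lies outside that ball. On this ball $|x-z|\sim|x-y|$, so Theorem~\ref{tD} lets me pull the factor
\[
\frac{C_k}{\fbrk{1+|\tau|^{1/2}|x-y|}^k\fbrk{1+m(x,V)|x-y|}^k}\cdot\rev{|x-y|^{n-2}}
\]
out of the two integrals appearing in Lemma~\ref{lB}(a). The potential integral then produces the $V$-term in (\ref{e11}), while the $|u|$-integral produces $|x-y|^{-(n+1)}\cdot|B(y,|x-y|/2)|\cdot|x-y|^{-(n-2)}$, i.e.\ the second bracketed term $|x-y|^{-1}$. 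Integrating in $\tau$ against $w(\tau)$ via the identity above yields (\ref{e11}); note the $V$-integral over $B(y,|x-y|/2)$ is trivially dominated by the one over $B(y,|x-y|)$ in the statement.

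For (\ref{e12}), I would differentiate the fundamental-solution identity $(-\Delta_x+V(x)+i\tau)\Gamma(x,y,\tau)=\delta(x-y)$ in the $y$-variable; this shows that for fixed $(y,\tau)$ the function $x\mapsto\nabla_y\Gamma(x,y,\tau)$ solves $-\Delta u+(V+i\tau)u=0$ on any ball disjoint from $y$. Interior H\"older regularity for this complex-shifted Schr\"odinger equation (which follows from the standard Moser iteration: pair with $\bar u$ and take real parts, so that $i\tau$ drops out of the Caccioppoli inequality, leaving $V\in B_q$, $q>n/2$, to drive the iteration) then yields
\[
|\nabla_y\Gamma(x+h,y,\tau)-\nabla_y\Gamma(x,y,\tau)|\les\brk{\frac{|h|}{|x-y|}}^{\delta}\sup_{z\in B(x,|x-y|/4)}|\nabla_y\Gamma(z,y,\tau)|
\]
for some $\delta>0$. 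Plugging in the bound from the preceding paragraph at the point $z$ (noting $|z-y|\sim|x-y|$; Lemma~\ref{lA}(d)--(e) replaces $m(z,V)$ by $m(x,V)$ at the cost of a power of $(1+m(x,V)|x-y|)$, absorbed by enlarging $k$ at the outset), and integrating in $\tau$, gives (\ref{e12}).

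The principal technical obstacle is the $\tau$-uniform H\"older exponent used in the third paragraph: while well understood for real $V\in B_q$, it requires verification that the imaginary shift $i\tau$ does not degrade the exponent. The $\bar u$-pairing argument sketched above does this, but it is the one non-mechanical ingredient; everything else (commuting $\nabla_y$ past the $\tau$-integral, integrability of $w(\tau)$ at $\tau=0$ and $\infty$, and the various ball inclusions) is standard bookkeeping.
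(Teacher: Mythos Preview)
Your outline is correct and, for (\ref{e11}), coincides with the paper's: the $\tau$-integral representation of $K_3$, the bound $\int_{\R}|\tau|^{-1/2}(1+|\tau|^{1/2}|x-y|)^{-k}\,d\tau\les|x-y|^{-1}$, and the use of Lemma~\ref{lB}(a) (via the symmetry $\Gamma(z,y,\tau)=\Gamma(y,z,-\tau)$) together with Theorem~\ref{tD} to control $|\nabla_y\Gamma|$ are exactly what the paper does.

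The genuine difference is in how you pass to the H\"older-in-$x$ estimate for (\ref{e12}). You invoke De Giorgi--Moser interior H\"older regularity for the complex-valued solution $x\mapsto\nabla_y\Gamma(x,y,\tau)$, correctly flagging $\tau$-uniformity of the exponent as the delicate point. The paper avoids this by combining Morrey's embedding with Lemma~\ref{lB}(b): since $u(z)=\nabla_y\Gamma(z,y,\tau)$ solves the homogeneous equation in $z$, Lemma~\ref{lB}(b) gives $\bigl(\int_{B(x,R)}|\nabla_z u|^t\bigr)^{1/t}\le CR^{n/q-2}\{1+Rm(x,V)\}^{k_0}\sup_{B(x,2R)}|u|$ with $1/t=1/q-1/n$, and Morrey then yields the increment bound with the explicit exponent $\delta=1-n/t=2-n/q>0$. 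This buys two things: the $\tau$-uniformity is already packaged into Shen's lemma (so no separate Moser argument for complex solutions is needed---note that while your Caccioppoli step is indeed $\tau$-uniform, the oscillation-decay half of De Giorgi--Moser for complex-valued $u$ is not entirely standard), and one gets a concrete $\delta$ tied to $q$. Your route would ultimately work, but it amounts to re-deriving the content of Lemma~\ref{lB}(b); quoting that lemma and feeding it into Morrey is the shorter path to the same intermediate inequality, after which both proofs finish identically via Lemma~\ref{lB}(a) and the $\tau$-integration.
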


\begin{remark} If $V\in B_n$, then \eqref{e4} follows immediately
from \eqref{e12}. This can tell us how the kernel behave when $V$
changes. However, we don't have similar result about the smoothness
with respect to the second variable.
\end{remark}

\begin{proof}[Proof of Lemma \ref{l4}]
We easily know that $K_1(x,y)=\Gamma(x,y,0)V(y)$. It immediately
follows from Theorem \ref{tD} that, for any $x,\ y\in \mathbb{R}^n$,
\begin{eqnarray*}
\abs{K_1(x,y)}\leq \frac{C_k}{\fbrk{1+m(x,V)\abs{x-y}}^k}\cdot
\rev{\abs{x-y}^{n-2}}V(y).
\end{eqnarray*}

For (\ref{e8}), fix $x,\ y\in \mathbb{R}^n$, and fix
$n/2<q_0<\min(n,q)$, then we know $V\in B_{q_0}$. Let
$R=\frac{\abs{x-y}}{8}$, $1/t=1/{q_0}-1/n$, then $\delta=1-n/t>0$
and for any $0<h<\frac{R}{2}$, it follows from the embedding theorem
of Morrey (see \cite{GT}) that
\begin{eqnarray*}
&&\abs{K_1(x+h,y)-K_1(x,y)} \\
&\leq&
\abs{\Gamma(x+h,y,0)-\Gamma(x,y,0)}V(y)\\
&\leq&
C\abs{h}^{1-(n/t)}\brk{\int_{B(x,R)}\abs{\nabla_x\Gamma(z,y,0)}^tdz}^{1/t}V(y).
\end{eqnarray*}
and then using Lemma \ref{lB} we have
\begin{eqnarray*}
&&\abs{K_1(x+h,y)-K_1(x,y)} \\
&\leq&C\abs{h}^{1-(n/t)}R^{(n/{q_0})-2}\fbrk{1+Rm(x,V)}^{k_0}\sup_{z \in B(x,2R)}\abs{\Gamma(z,y,0)}V(y)\\
&\leq&C\frac{\abs{h}^{\delta}}{R^{\delta}}\fbrk{1+Rm(x,V)}^{k_0}\sup_{z \in B(x,2R)}\abs{\Gamma(z,y,0)}V(y)\\
&\leq&C\frac{\abs{h}^{\delta}}{R^{\delta}}\fbrk{1+Rm(x,V)}^{k_0}\sup_{z \in B(x,2R)}\frac{C_{k_1}}{\fbrk{1+m(y,V)\abs{z-y}}^{k_1}}\cdot\rev{\abs{z-y}^{n-2}}V(y)\\
&\leq&C_k\frac{\abs{h}^{\delta}}{\abs{x-y}^{\delta}}\rev{\fbrk{1+m(x,V)\abs{x-y}}^k}\cdot\rev{\abs{x-y}^{n-2}}V(y)\quad (k_1\ large).\\
\end{eqnarray*}
where we used (f) of Lemma \ref{lA} in the last inequality.
\end{proof}

\begin{proof}[Proof of Lemma \ref{l5}]
By functional calculus, we may write
$$(-\Delta+V)^{-1/2}=-\rev{2\pi}\int_\mathbb{R}(-i\tau)^{-1/2}(-\Delta+V+i\tau)^{-1}d\tau,$$
then we know that
\begin{eqnarray}\label{e13}
K_2(x,y)=-\rev{2\pi}\int_\mathbb{R}(-i\tau)^{-1/2}\Gamma(x,y,\tau)d\tau
V(y)^{1/2}.
\end{eqnarray}
In order to estimate the integration, we claim that: For $k>2$, then
\begin{eqnarray}\label{e26}
\int_\mathbb{R} \abs{\tau}^{-1/2}
\fbrk{1+\abs{\tau}^{1/2}\abs{x-y}}^{-k} d \tau \leq
\frac{C_k}{\abs{x-y}}.
\end{eqnarray}
In fact, we have
\begin{eqnarray*}
&&\int_\mathbb{R} \abs{\tau}^{-1/2}
\fbrk{1+\abs{\tau}^{1/2}\abs{x-y}}^{-k} d \tau\\
&=&\brk{\int_{\abs{\tau}\leq \abs{x-y}^{-2}}+\int_{\abs{\tau}\geq
\abs{x-y}^{-2}}} \abs{\tau}^{-1/2}
\fbrk{1+\abs{\tau}^{1/2}\abs{x-y}}^{-k} d \tau\\
&\leq&\int_{\abs{\tau}\leq\abs{x-y}^{-2}}\abs{\tau}^{-1/2}d
\tau+\int_{\abs{\tau}\geq \abs{x-y}^{-2}}\abs{\tau}^{(-k-1)/2}\abs{x-y}^{-k} d \tau\\
&\leq&\frac{C_k}{\abs{x-y}}.\\
\end{eqnarray*}

From Theorem \ref{tD} and the estimate $(\ref{e26})$, we immediately
get (\ref{e9}). For (\ref{e10}), fix $x,\ y\in \mathbb{R}^n$, and
fix $n/2<q_0<\min(n,q)$, then we know $V\in B_{q_0}$. Let
$R=\frac{\abs{x-y}}{8}$, $1/t=1/{q_0}-1/n$, then $\delta=1-n/t>0$
and for any $0<h<\frac{R}{2}$, we have
\begin{eqnarray}\label{e14}
\qquad \abs{K_2(x+h,y)-K_2(x,y)}\leq
\rev{2\pi}\int_\mathbb{R}\abs{\tau}^{-1/2}\abs{\Gamma(x+h,y,\tau)-\Gamma(x,y,\tau)}d\tau
V(y)^{1/2}.
\end{eqnarray}

Similarly, it follows from the embedding theorem of Morrey and Lemma
\ref{lB} that
\begin{eqnarray*}
&&\abs{\Gamma(x+h,y,\tau)-\Gamma(x,y,\tau)}\\
&\leq&C\abs{h}^{1-(n/t)}\brk{\int_{B(x,R)}\abs{\nabla_x\Gamma(z,y,\tau)}^tdz}^{1/t}\\
&\leq&C\abs{h}^{1-(n/t)}R^{(n/{q_0})-2}\fbrk{1+Rm(x,V)}^{k_0}\sup_{z \in B(x,2R)}\abs{\Gamma(z,y,\tau)}\\
&\leq&C\frac{\abs{h}^{\delta}}{R^{\delta}}\fbrk{1+Rm(x,V)}^{k_0}\sup_{z \in B(x,2R)}\abs{\Gamma(z,y,\tau)}\\
&\leq&C\frac{\abs{h}^{\delta}}{R^{\delta}}\fbrk{1+Rm(x,V)}^{k_0}\sup_{z \in B(x,2R)}\frac{C_k\fbrk{1+\abs{\tau}^{1/2}\abs{z-y}}^{-k}}{\fbrk{1+m(y,V)\abs{z-y}}^k}\cdot\rev{\abs{z-y}^{n-2}}\\
&\leq&C_k\frac{\abs{h}^{\delta}}{\abs{x-y}^{\delta}}\frac{\fbrk{1+\abs{\tau}^{1/2}\abs{x-y}}^{-k}}{\fbrk{1+m(y,V)\abs{x-y}}^k}\cdot\rev{\abs{x-y}^{n-2}}.\\
\end{eqnarray*}
Hence, insert this to (\ref{e14}), it follows from the estimate
$(\ref{e26})$ that
$$\abs{K_2(x+h,y)-K_2(x,y)}\leq C_k \frac{\abs{h}^{\delta}}{\abs{x-y}^{n-1+\delta}}\frac{1}{\fbrk{1+m(y,V)\abs{x-y}}^k}V(y)^{1/2}.$$
\end{proof}

\begin{proof}[Proof of Lemma \ref{l6}]
By partial integral, we know that
\begin{eqnarray}\label{e15}
K_3(x,y)=\rev{2\pi}\int_\mathbb{R}(-i\tau)^{-1/2}\nabla_y\Gamma(x,y,\tau)d\tau.
\end{eqnarray}
Fix $x,\ y\in \mathbb{R}^n$, Let $R=\frac{\abs{x-y}}{8}$,
$1/t=1/q-1/n$, $\delta=n/q-2>0$, and for any $0<h<\frac{R}{2}$, we
have
\begin{eqnarray}\label{e16}
\abs{K_3(x+h,y)-K_3(x,y)}\leq
\rev{2\pi}\int_\mathbb{R}\abs{\tau}^{-1/2}\abs{\nabla_y\Gamma(x+h,y,\tau)-\nabla_y\Gamma(x,y,\tau)}d\tau.
\end{eqnarray}
Similarly, it follows from the imbedding theorem of Morrey and Lemma
\ref{lB} that
\begin{eqnarray}
&&\abs{\nabla_y\Gamma(x+h,y,\tau)-\nabla_y\Gamma(x,y,\tau)}\label{e17} \\
&\leq&C\abs{h}^{1-(n/t)}\brk{\int_{B(x,R)}\abs{\nabla_x\nabla_y\Gamma(z,y,\tau)}^tdz}^{1/t} \nonumber \\
&\leq&C\abs{h}^{1-(n/t)}R^{(n/{q})-2}\fbrk{1+Rm(x,V)}^{k_0}\sup_{z
\in B(x,2R)}\abs{\nabla_y\Gamma(z,y,\tau)}. \nonumber
\end{eqnarray}
Since $\Gamma(z,y,\tau)=\Gamma(y,z,-\tau)$, then
$\nabla_y\Gamma(z,y,\tau)=\nabla_x\Gamma(y,z,-\tau)$. It follows
from (a) of Lemma \ref{lB} that
\begin{eqnarray*}
&&\sup_{z \in B(x,2R)}\abs{\nabla_y\Gamma(z,y,\tau)} \leq \sup_{z
\in B(x,2R)}\abs{\nabla_x\Gamma(y,z,-\tau)}\\
&\leq&\sup_{z \in B(x,2R)}\ \{\sup_{\eta \in
B(y,\abs{y-z}/4)}\abs{\Gamma(\eta,z,-\tau)}\cdot
\int_{B(y,\abs{z-y}/2)}\frac{V(\xi)}{\abs{y-\xi}^{n-1}}d\xi\nonumber\\
&&\qquad\qquad\qquad+\frac{C}{\abs{y-z}^{n+1}}\int_{B(y,\abs{z-y}/2)}\Gamma(\xi,z,-\tau)d\xi\
\}.\nonumber
\end{eqnarray*}

Using the fact that $\abs{\eta-z}\sim\abs{y-z}$,
$\abs{\xi-z}\sim\abs{y-z}$ and $\abs{x-y}\sim\abs{y-z}$, choosing
$k_1$ sufficiently large, it follows from Theorem \ref{tD} and (f)
of Lemma \ref{lA} that
\begin{eqnarray}
&&\sup_{z \in B(x,2R)}\abs{\nabla_y\Gamma(z,y,\tau)}\label{e18}\\
&\leq&\sup_{z \in
B(x,2R)}\frac{C_{k_1}}{\{1+\abs{\tau}^{1/2}\abs{y-z}\}^{k_1}
\{1+m(z,V)\abs{y-z}\}^{k_1}}\cdot
\rev{\abs{y-z}^{n-2}}\int_{B(y,\abs{x-y})}\frac{V(\xi)}{\abs{y-\xi}^{n-1}}d\xi\nonumber\\
&&\qquad\qquad\qquad+\frac{C_{k_1}}{\{1+\abs{\tau}^{1/2}\abs{y-z}\}^{k_1}
\{1+m(z,V)\abs{y-z}\}^{k_1}}\cdot \rev{\abs{y-z}^{n-1}}\nonumber\\
&\leq&\frac{C_k}{\{1+\abs{\tau}^{1/2}\abs{x-y}\}^k\{1+m(x,V)\abs{x-y}\}^k}\cdot \rev{\abs{x-y}^{n-2}}\int_{B(y,\abs{x-y})}\frac{V(\xi)}{\abs{y-\xi}^{n-1}}d\xi\nonumber\\
&&\qquad\qquad\qquad+\frac{C_k}{\{1+\abs{\tau}^{1/2}\abs{x-y}\}^k\{1+m(x,V)\abs{x-y}\}^k}\cdot
\rev{\abs{x-y}^{n-1}}.\nonumber
\end{eqnarray}
From the estimate $(\ref{e26})$ and (\ref{e18}), we immediately get
(\ref{e11}). Inserting (\ref{e18}) to (\ref{e17}), we get that
\begin{eqnarray}
&&\abs{\nabla_y\Gamma(x+h,y,\tau)-\nabla_y\Gamma(x,y,\tau)}\label{e19}\\
&\leq&C_k
\frac{\abs{h}^{\delta}}{\abs{x-y}^{\delta}}\frac{C_k}{\{1+\abs{\tau}^{1/2}\abs{x-y}\}^k\fbrk{1+m(x,V)\abs{x-y}}^k}\nonumber\\
&&\cdot
\brk{\rev{\abs{x-y}^{n-2}}\int_{B(y,\abs{x-y})}\frac{V(\xi)}{\abs{y-\xi}^{n-1}}d\xi+\rev{\abs{x-y}^{n-1}}}.\nonumber
\end{eqnarray}
Inserting (\ref{e19}) to (\ref{e16}),  we get from the estimate
$(\ref{e26})$ that
\begin{eqnarray*}
&&\abs{K_3(x+h,y)-K_3(x,y)}\\
&\leq& C_k
\frac{\abs{h}^{\delta}}{\abs{x-y}^{\delta}}\frac{1}{\fbrk{1+m(x,V)\abs{x-y}}^k}\cdot\brk{\rev{\abs{x-y}^{n-1}}\int_{B(y,\abs{x-y})}\frac{V(\xi)}{\abs{y-\xi}^{n-1}}d\xi+\rev{\abs{x-y}^{n}}}.\\
\end{eqnarray*}
\end{proof}

\section{Proof of main results}

We first discuss the problem for general operator $Tf(x)=\int
K(x,y)f(y)dy$. Later, we will specialize to $T_j\ (j=1,2,3)$.
\begin{proposition}\label{p1}
Let $m>1$, suppose $T$ is bounded on $L^p$ for every $p \in
(m',\infty)$, and $K$ satisfies $H(m)$, then $\forall\ b\ \in
\mathbf{BMO}$, $[b,T]$ is bounded on $L^p$ for every $p \in
(m',\infty)$, and $$\normo{[b,T]f}_p \leq C_p
\normo{b}_{\mathbf{BMO}} \normo{f}_p.$$
\end{proposition}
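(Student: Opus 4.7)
The plan is to establish the pointwise sharp-maximal bound
\begin{equation*}
M^{\#}([b,T]f)(x) \leq C\|b\|_{\mathbf{BMO}}\bigl(M_s(Tf)(x) + M_p f(x)\bigr)
\end{equation*}
for some $s>1$ and any $p>m'$, and then invoke the Fefferman-Stein inequality together with the $L^{p}$-boundedness of $T$ for $p\in(m',\infty)$ to conclude $\|[b,T]f\|_p \lesssim \|b\|_{\mathbf{BMO}}\|f\|_p$. This is the Str\"omberg--Alvarez strategy, adapted so that the weak smoothness $H(m)$ replaces the Calder\'on--Zygmund estimate \eqref{e4}.

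Fix a ball $B=B(x_0,r)$ containing $x$, set $b_B=|B|^{-1}\int_B b$, and split $f = f_1 + f_2$ with $f_1 = f\chi_{16B}$. Write
\begin{equation*}
[b,T]f(y) = (b(y)-b_B)Tf(y) - T((b-b_B)f_1)(y) - T((b-b_B)f_2)(y),
\end{equation*}
subtract the constant $c_B = T((b-b_B)f_2)(x_0)$, and bound $\frac{1}{|B|}\int_B |[b,T]f - c_B|\,dy$ by three averages. The first, $\frac{1}{|B|}\int_B|(b-b_B)Tf|$, is handled by H\"older with an exponent $s>1$ and John-Nirenberg, giving $C\|b\|_{\mathbf{BMO}}M_s(Tf)(x)$. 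The second, $\frac{1}{|B|}\int_B|T((b-b_B)f_1)|$, is handled by H\"older with some $p_1\in(m',\infty)$, the $L^{p_1}$-boundedness of $T$ applied to $(b-b_B)f_1$, and John-Nirenberg on $16B$ together with $|b_{16B}-b_B|\lesssim\|b\|_{\mathbf{BMO}}$, yielding $C\|b\|_{\mathbf{BMO}}M_p f(x)$ for some $p>m'$.

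The crucial third piece is where $H(m)$ enters. For $y\in B$ and $z\notin 16B$,
\begin{equation*}
\aabs{T((b-b_B)f_2)(y) - T((b-b_B)f_2)(x_0)} \leq \sum_{k\geq 5}\int_{A_k}|K(y,z)-K(x_0,z)|\,|b(z)-b_B|\,|f(z)|\,dz,
\end{equation*}
with $A_k = \{2^k r\leq|z-x_0|<2^{k+1}r\}$. H\"older with exponents $(m,m')$ on each annulus separates the kernel difference from $(b-b_B)f$. A further H\"older (chosen so that the $f$-exponent equals some $p>m'$), John-Nirenberg on $2^{k+1}B$, and the standard bound $|b_{2^{k+1}B} - b_B|\leq Ck\|b\|_{\mathbf{BMO}}$, give
\begin{equation*}
\|(b-b_B)f\|_{L^{m'}(A_k)}\leq Ck\|b\|_{\mathbf{BMO}}(2^kr)^{n/m'}M_p f(x).
\end{equation*}
The $k$-th summand of the dyadic sum is then precisely $k(2^kr)^{n/m'}\bigl(\int_{A_k}|K(y,z)-K(x_0,z)|^m dz\bigr)^{1/m}$ multiplied by $C\|b\|_{\mathbf{BMO}}M_p f(x)$, which matches the series in \eqref{e6}; applying $H(m)$ gives a total bound $C\|b\|_{\mathbf{BMO}}M_p f(x)$.

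The main obstacle is this third estimate: one must arrange H\"older so that the John-Nirenberg factor $k$ and the volume factor $(2^kr)^{n/m'}$ combine exactly into the structure of $H(m)$ --- this is precisely why the factor $k$ was built into the definition \eqref{e6}. Once the sharp-maximal pointwise bound is in hand, applying Fefferman--Stein to a suitable truncation of $[b,T]f$ (to guarantee a priori $L^p$-finiteness), along with the $L^p$-boundedness of $M$, $M_s$ for $s>1$, and $M_p$ for $p>m'$, completes the proof.
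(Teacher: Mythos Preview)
Your proposal is correct and follows essentially the same approach as the paper: the Str\"omberg sharp-maximal argument with the commutator split into $(b-b_B)Tf$, $T((b-b_B)f_1)$, and $T((b-b_B)f_2)$, where the third term is handled by dyadic decomposition and H\"older with exponents $(m,m')$ so that the $H(m)$ condition absorbs the logarithmic factor $k$ coming from $|b_{2^{k+1}B}-b_B|$. The only cosmetic discrepancy is that you take $f_1=f\chi_{16B}$ while the paper uses $32I$; since the definition of $H(m)$ in \eqref{e6} starts the sum at $k=5$, you should use $32B$ (or adjust the starting index) so that the support of $f_2$ matches the range of the series.
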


We adopt the idea of Str\"omberg (see \cite{TO}). Recall that the
sharp function of Fefferman-Stein is defined by
\begin{eqnarray}\label{e20}
M^{\sharp}f(x)=\sup_{x\in B}\rev{|B|} \int_B\abs{f(y)-f_B}dy,
\end{eqnarray}
where $f_B=\rev{|B|}\int_Bf(y)dy$, and the supremum is taken on all
balls $B$ with $x \in B$.

Recall that $\mathbf{BMO}$ is defined by
\begin{eqnarray}\label{e21}
\mathbf{BMO}(\mathbb{R}^n)=\{f \in
L_{loc}^1(\mathbb{R}^n):\normo{f}_{\mathbf{BMO}}=\normo{M^{\sharp}f}_{\infty}<\infty
\}.
\end{eqnarray}
Two basic facts about $\mathbf{BMO}$ may be in order. We use $2^kB$
to denote the ball with the same center as $B$ but with $2^k$ times
radius.
\begin{eqnarray}\label{e22}
\abs{f_{2^kB}-f_B}\leq C (k+1)\normo{f}_{\mathbf{BMO}},\mbox{ for }
k>0.
\end{eqnarray}
The second one is due to John-Nirenberg.
\begin{eqnarray}\label{e23}
\normo{f}_{\mathbf{BMO}} \sim \sup_{B}
\brk{\rev{|B|}\int_B\abs{f(y)-f_B}^pdy}^{1/p},\mbox{ for any } p>1.
\end{eqnarray}
Proposition \ref{p1} follows immediately from the following lemma
and a theorem of Fefferman-Stein on sharp function.
\begin{lemma}\label{l8}
Let $T$ satisfies the same condition in Proposition \ref{p1}. Then
$\forall s>m',$ there exists constant $C_s>0, $ such that $ \forall\
f \in L_{loc}^1,\ b\ \in \mathbf{BMO}$
\begin{eqnarray}\label{e24}
 M^\sharp([b,T]f)(x)\leq
C_s\normo{b}_{\mathbf{BMO}}\{M_s(Tf)(x)+M_s(f)(x)\},
\end{eqnarray}
where $M_s(f)=M(|f|^s)^{1/s}$ and $M$ is Hardy-Littlewood maximal
function.
\end{lemma}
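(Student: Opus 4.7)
The plan is to follow the by-now-standard Str\"omberg/P\'erez route to sharp-function commutator bounds, but to use the fractional smoothness condition $H(m)$ in place of the usual pointwise H\"older estimate \eqref{e4}. Fix $s>m'$, fix a ball $B=B(x_0,l)$ containing $x$, and set $B^*=B(x_0,2^{5}l)$, $f_1=f\chi_{B^*}$, $f_2=f-f_1$. Using the identity $[b,T]f=(b-b_{B^*})Tf-T((b-b_{B^*})f_1)-T((b-b_{B^*})f_2)$ and subtracting the constant $c_B:=T((b-b_{B^*})f_2)(x_0)$ before averaging, one splits
\[
\rev{|B|}\int_B\aabs{[b,T]f(y)-c_B}\,dy\leq \mathrm{I}+\mathrm{II}+\mathrm{III},
\]
corresponding to the three summands.

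For $\mathrm{I}$, I would apply H\"older with exponents $s,s'$ together with John–Nirenberg \eqref{e23}, so that $\mathrm{I}\les \|b\|_{\mathbf{BMO}}M_s(Tf)(x)$. For $\mathrm{II}$, pick any $u\in(m',s)$; since $T$ is bounded on $L^u$, the average of $|T((b-b_{B^*})f_1)|$ over $B$ is at most $|B|^{-1/u}\|(b-b_{B^*})f_1\|_u$, and a further H\"older with exponents $(s/u,(s/u)')$ plus John–Nirenberg controls the $\mathbf{BMO}$ factor and produces precisely $C\|b\|_{\mathbf{BMO}}M_s(f)(x)$ (the exponents work out because the power of $|B^*|$ turns out to cancel).

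The main work, and the one genuinely new step, is $\mathrm{III}$. For $y\in B$, write
\[
T((b-b_{B^*})f_2)(y)-T((b-b_{B^*})f_2)(x_0)=\sum_{k=5}^{\infty}\int_{2^{k}l\leq|z-x_0|<2^{k+1}l}\bigl[K(y,z)-K(x_0,z)\bigr](b(z)-b_{B^*})f(z)\,dz,
\]
and apply H\"older with exponents $m,m'$ on each annulus. The first factor is the integral appearing in \eqref{e6}. For the second factor, use H\"older again with exponents $s/m',(s/m')'$: John–Nirenberg together with $|b_{B^*}-b_{2^{k+1}B}|\les(k+1)\|b\|_{\mathbf{BMO}}$ (from \eqref{e22}) gives
\[
\brk{\int_{2^{k+1}B}|b-b_{B^*}|^{m'}|f|^{m'}}^{1/m'}\les (k+1)\|b\|_{\mathbf{BMO}}(2^{k}l)^{n/m'}M_s(f)(x),
\]
the powers of $|2^{k+1}B|$ telescoping to $(2^{k}l)^{n/m'}$. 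Multiplying and summing in $k$, the factor $k(2^{k}l)^{n/m'}$ is exactly what is summable under $H(m)$, yielding $\mathrm{III}\les\|b\|_{\mathbf{BMO}}M_s(f)(x)$.

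The principal obstacle is making the exponent bookkeeping in $\mathrm{III}$ consistent: one must verify that the product of the $L^m$ norm of the kernel difference with the $L^{m'}$ norm of $(b-b_{B^*})f$ reproduces precisely the weight $k(2^{k}l)^{n/m'}$ called for in $H(m)$, while still leaving behind a clean $M_s(f)(x)$. The choice $B^*=B(x_0,2^{5}l)$ absorbs the annuli $k=1,\dots,4$ into the local piece $\mathrm{II}$ so that the tail estimate starts at $k=5$, in agreement with the definition of $H(m)$.
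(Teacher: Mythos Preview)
Your proposal is correct and follows essentially the same Str\"omberg-type argument as the paper: the same three-term decomposition into a local $\mathbf{BMO}$--$Tf$ piece, a near piece handled via $L^u$ boundedness of $T$ for some $u\in(m',s)$, and a far piece controlled annulus-by-annulus using H\"older with exponents $m,m'$ and the condition $H(m)$. The only cosmetic differences are that the paper subtracts $b_I$ (the average over the small ball) rather than $b_{B^*}$, and subtracts the ball-average of each piece separately rather than a single constant $c_B$; neither change affects the substance of the argument.
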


\begin{proof}
Fix $s>m',\ f \in L_{loc}^1, \  x \in \mathbb{R}^n$, and fix
a ball $I=B(x_0,l)$ with $ x \in I.$ We only need to control
$J=\frac{1}{\abs I}\int_I \abs{[b,T]f(y)-([b,T]f)_I} dy$ by the
right side of (\ref{e24}). Let $f=f_1+f_2$, where $f_1=f\chi_{32I},\
f_2=f-f_1$. Then
$[b,T]f=[b-b_I,T]f=(b-b_I)Tf-T(b-b_I)f_1-T(b-b_I)f_2\triangleq
A_1f+A_2f+A_3f$,  and we get
\begin{eqnarray*}
J & \leq & \frac{1}{\abs I}\int_I \abs {A_1f(y)-(A_1f)_I} dy\\&
&+\frac{1}{\abs I}\int_I \abs {A_2f(y)-(A_2f)_I} dy+\frac{1}{\abs
I}\int_I \abs{ A_3f(y)-(A_3f)_I} dy \\
& \triangleq & J_1+J_2+J_3.
\end{eqnarray*}
Step 1. First we consider $J_1$. By H\"older inequality and
(\ref{e23}),
\begin{eqnarray*}
 J_1 &\leq& \frac{2}{\abs I}\int_I \abs {A_1f(y)} dy\\
&=&\frac{2}{\abs I}\int_I \abs {(b-b_I)Tf(y)} dy\\
&\leq& 2 \brk{\frac{1}{\abs I}\int_I \abs {(b-b_I)} ^{s'}
dy}^{\rev{s'}}\brk{\frac{1}{\abs I}\int_I \abs{Tf(y)} ^s dy}^{\rev s} \\
&\leq& 2 \normo{b}_{\mathbf{BMO}} M_s(Tf)(x).
\end{eqnarray*}
Step 2. Second we consider $J_2$. Fix $s_1$ such that $s>s_1>m'$,
and let $s_2=\frac{ss_1}{s-s_1}$, then we have
\begin{eqnarray*}
 J_2 &\leq&2 \rev{\abs I}\int_I \abs {A_2f(y)} dy\\
     &\leq&2\brk{\rev{\abs I}\int_I \abs {A_2f(y)} ^{s_1} dy}^{\rev {s_1}}\\
     &\leq&2\brk{\rev{\abs I}\int_{32I} \abs {(b-b_I)f(y)} ^{s_1} dy}^{\rev {s_1}}\\
     &\leq&C\brk{\rev{\abs{32I}}\int_{32I} \abs {b-b_I} ^{s_2} dy}^{\rev {s_2}}\brk{\rev{\abs{32I}}\int_{32I} \abs {f(y)} ^s dy}^{\rev s} \\
&\leq& C \normo{b}_{\mathbf{BMO}} M_s(f)(x).
\end{eqnarray*}
Step 3. Last we consider $J_3$. Set $c_I=\int_{\abs{z-x_0} >32l}
K(x_0,z)(b(z)-b_I)f(z)dz$, then we have that
\begin{eqnarray*}
J_3 &\leq& \frac{2}{\abs I}\int_I \abs {A_3f(y)-c_I} dy.\\
 &\leq& 2\rev{\abs I}\int_I \aabs {\int_{\abs{z-x_0} \geq 32l} \fbrk{K(y,z)-K(x_0,z)}(b(z)-b_I)f(z)dz} dy\\
&\leq&2\rev{\abs I}\int_I \int_{\abs{z-x_0} > 32l}\abs{\fbrk{K(y,z)-K(x_0,z)}(b(z)-b_I)f(z)}dz dy\\
&=&2\rev{\abs I}\int_I \sum_{k=5}^{\infty} \int_{2^kl \leq \abs{z-x_0} < 2^{k+1}l}\abs{\fbrk{K(y,z)-K(x_0,z)}(b(z)-b_I)f(z)}dz dy.\\
\end{eqnarray*}
From H\"older's inequality, we get

\begin{eqnarray*}
J_3&\leq&2\rev{\abs I}\int_I\ \sum_{k=5}^{\infty} \brk{\int_{2^kl \leq \abs{z-x_0} < 2^{k+1}l}\abs{K(y,z)-K(x_0,z)}^mdz}^{1/m}\\
& &\qquad\cdot\brk{\int_{2^kl \leq \abs{z-x_0} < 2^{k+1}l}\abs{(b(z)-b_I)f(z)}^{m'}dz}^{1/m'}\ dy\\
&\leq&2\rev{\abs I}\int_I\ \sum_{k=5}^{\infty}\brk{ \int_{2^kl \leq \abs{z-x_0} < 2^{k+1}l}\abs{K(y,z)-K(x_0,z)}^mdz}^{1/m}(2^kl)^{n/m'}k\\
& &\cdot \rev{(2^kl)^{n/m'}k}\brk{\int_{2^kl \leq \abs{z-x_0} < 2^{k+1}l}\abs{(b(z)-b_I)f(z)}^{m'}dz}^{1/m'}\ dy\\
&\leq&C \sup_{k \geq 5}\rev{(2^kl)^{n/m'}k}\brk{\int_{2^kl \leq
\abs{z-x_0} < 2^{k+1}l}\abs{(b(z)-b_I)f(z)}^{m'}dz}^{1/m'}\\
&\leq&C \sup_{k \geq5}\rev{k}\brk{\rev{(2^kl)^{n}}\int_{\abs{z-x_0}
<2^{k+1}l}\abs{(b(z)-b_{2^{k+1}I}+b_{2^{k+1}I}-b_I)f(z)}^{m'}dz}^{1/m'}\\
&\leq&C \sup_{k\geq5}\rev{k}{(k+2)\normo{b}_{\mathbf{BMO}}M_sf(x)}\quad (by\ (\ref{e22}))\\
 &\leq&C \normo{b}_{\mathbf{BMO}}M_sf(x) .
\end{eqnarray*}
This completes the proof of lemma \ref{l8}.
\end{proof}

\begin{proof}[Proof of Theorem \ref{t1}]
Now we begin to prove Theorem \ref{t1}. Considering Remark \ref{r1},
we can assume $q>\frac{n}{2}, q'<p$. We first prove (i). By
Proposition \ref{p1} and Theorem \ref{tA}, it suffices to prove that
$K_1$ satisfies $H(q)$ (see (\ref{e6})). From (\ref{e8}), we have
\begin{eqnarray*}
&&\brk{\int_{2^kl \leq \abs{y-x_0} < 2^{k+1}l}
\abs{K_1(x,y)-K_1(x_0,y)}^{q}dy}^{1/q}\\
&\leq&C_N\frac{l^{\delta}}{(2^kl)^{n-2+\delta}}\rev{\fbrk{1+m(x_0,V)2^kl}^N}\int_{B(x_0,2^{k+3}l)}
V(y)^q dy^{1/q}\\
&\leq&C_N\frac{l^{\delta}}{(2^kl)^{n-2+\delta}}\rev{\fbrk{1+m(x_0,V)2^kl}^N}(2^{k}l)^{-n/q'}\int_{B(x_0,2^{k}l)}
V(\xi)d\xi\\
&\leq&C_N\frac{l^{\delta}}{(2^kl)^{n-2+\delta}}(2^{k}l)^{n/q-2}\quad (by\ lemma\ \ref{l2})\\
&\leq&C\frac{l^{\delta}}{(2^kl)^{(n/{q'})+\delta}}.
\end{eqnarray*}
Thus, we can get
\begin{eqnarray*}
&&\sum_{k=5}^{\infty} k(2^kl)^{\frac{n}{q'}}\brk{\int_{2^kl \leq
\abs{y-x_0} < 2^{k+1}l}
\abs{K_1(x,y)-K_1(x_0,y)}^{q}dy}^{1/q}\\
&\leq&\sum_{k=5}^{\infty} \frac{Ck}{(2^k)^{\delta}}\leq C.
\end{eqnarray*}

For the proof of (ii).--- It suffices to prove that $K_2$ satisfies
$H(2q)$. From (\ref{e10}), we have
\begin{eqnarray*}
&&\brk{\int_{2^kl \leq \abs{y-x_0} < 2^{k+1}l}
\abs{K_2(x,y)-K_2(x_0,y)}^{2q}dy}^{1/(2q)}\\
&\leq&C_N\frac{l^{\delta}}{(2^kl)^{n-1+\delta}}\rev{\fbrk{1+m(x_0,V)2^kl}^N}\int_{B(x_0,2^{k+3}l)}
V(\xi)^{q}d\xi^{1/(2q)}\\
&\leq&C_N\frac{l^{\delta}}{(2^kl)^{n-1+\delta}}\rev{\fbrk{1+m(x_0,V)2^kl}^N}(2^{k}l)^{-n/(2q')}\int_{B(x_0,2^{k}l)}
V(\xi)d\xi^{1/2}\\
&\leq&C_N\frac{l^{\delta}}{(2^kl)^{n-1+\delta}}(2^{k}l)^{-n/(2q')+(n-2)/2}\\
&\leq&C\frac{l^{\delta}}{(2^kl)^{\delta}}(2^kl)^{-n/{(2q)'}},
\end{eqnarray*}
hence, we get
\begin{eqnarray*}
&&\sum_{k=5}^{\infty} k(2^kl)^{\frac{n}{(2q)'}}\brk{\int_{2^kl \leq
\abs{y-x_0} < 2^{k+1}l}
\abs{K_2(x,y)-K_2(x_0,y)}^{2q}dy}^{1/(2q)}\\
&\leq&\sum_{k=5}^{\infty} \frac{Ck}{(2^k)^{\delta}}\leq C.
\end{eqnarray*}

Last, we prove (iii).--- It suffices to prove that $K_3$ satisfies
$H(p_0)$. From (\ref{e12}), we have
\begin{eqnarray*}
&&\brk{\int_{2^kl \leq \abs{y-x_0} < 2^{k+1}l}
\abs{K_3(x,y)-K_3(x_0,y)}^{p_0}dy}^{1/p_0}\\
&\leq&C_N\frac{l^{\delta}}{(2^kl)^{n-1+\delta}}\rev{\fbrk{1+m(x_0,V)2^kl}^N}\normo{\int
\frac{V(\xi)\chi_{B(x_0,2^{k+3}l)}}{\abs{y-\xi}^{n-1}}d\xi}_{L_y^{p_0}}+\frac{l^{\delta}}{(2^kl)^{(n/{p_0'})+\delta}}\\
&\leq&C_N\frac{l^{\delta}}{(2^kl)^{n-1+\delta}}\rev{\fbrk{1+m(x_0,V)2^kl}^N}\int_{B(x_0,2^{k+3}l)}
V(\xi)^{q}d\xi^{1/q}+\frac{l^{\delta}}{(2^kl)^{(n/{p_0'})+\delta}}\\
&\leq&C_N\frac{l^{\delta}}{(2^kl)^{n-1+\delta}}\rev{\fbrk{1+m(x_0,V)2^kl}^N}(2^{k}l)^{-n/q'}\int_{B(x_0,2^{k}l)}
V(\xi)d\xi+\frac{l^{\delta}}{(2^kl)^{(n/{p_0'})+\delta}}\\
&\leq&C_N\frac{l^{\delta}}{(2^kl)^{n-1+\delta}}(2^{k}l)^{n/q-2}+\frac{l^{\delta}}{(2^kl)^{(n/{p_0'})+\delta}}\\
&\leq&C\frac{l^{\delta}}{(2^kl)^{(n/{p_0'})+\delta}},
\end{eqnarray*}
therefore, we get
\begin{eqnarray*}
&&\sum_{k=5}^{\infty} k(2^kl)^{\frac{n}{p_0'}}\brk{\int_{2^kl \leq
\abs{y-x_0} < 2^{k+1}l}
\abs{K_3(x,y)-K_3(x_0,y)}^{p_0}dy}^{1/p_0}\\
&\leq&\sum_{k=5}^{\infty} \frac{Ck}{(2^k)^{\delta}}\leq C.
\end{eqnarray*}
\end{proof}

\section{The Converse Result}

This section is devoted to the converse problem. Recall that
$T_3=\nabla(-\Delta+V)^{-1/2}$ is the Riesz transform associated to
Schr\"odinger operator. A natural problem is that whether the
converse holds. Namely, if $[b,T_3]$ is bounded on $L^2$, do we have
$b \in \mathbf{BMO}$? This is quite subtle. If $V\equiv0$, it
reduces to the classical Riesz transform. However, for general $V\in
B_q$, the converse fails. Considering $V\equiv 1$, which is in $B_q$
for every $q>1$, we have the following,
\begin{theorem}
There exist a function $b \notin \mathbf{BMO}$, such that $[b,T_3]$
is bounded on $L^2$.
\end{theorem}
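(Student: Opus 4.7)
I would take $b(x) = x_1$, the first-coordinate function. It satisfies $|b(x)-b(y)|\leq |x-y|$, yet $b\notin \mathbf{BMO}$: its mean oscillation on $B(0,r)$ is of order $r$, which is unbounded as $r\to\infty$.

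When $V\equiv 1$, Lemma \ref{lA}(b) gives $m(x,V)\sim 1$ uniformly. Substituting $V\equiv 1$ into (\ref{e11}) and using
\[
\int_{B(y,|x-y|)}\frac{d\xi}{|y-\xi|^{n-1}}\ \sim\ |x-y|,
\]
we obtain, for every integer $k>0$,
\[
|K_3(x,y)|\ \leq\ \frac{C_k}{(1+|x-y|)^k}\left(\frac{1}{|x-y|^{n-2}}+\frac{1}{|x-y|^n}\right),
\]
so $K_3$ has any desired polynomial decay at infinity. For $f\in C_c^{\infty}$ the commutator $[b,T_3]f=bT_3f-T_3(bf)$ is a well-defined $L^2$ function (the first term lies in $L^2$ because $T_3f$ inherits the rapid decay of $K_3$; the second is fine since $bf\in C_c^{\infty}$). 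Off $\supp f$ one has $[b,T_3]f(x)=\int \tilde K(x,y)f(y)\,dy$ with
\[
\tilde K(x,y)=(x_1-y_1)K_3(x,y),\qquad |\tilde K(x,y)|\leq \frac{C_k}{(1+|x-y|)^k}\left(\frac{1}{|x-y|^{n-3}}+\frac{1}{|x-y|^{n-1}}\right).
\]

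Since $n\geq 3$ the worst singularity $|x-y|^{-(n-1)}$ is locally integrable, while choosing $k>n$ gives integrability at infinity. Hence $\sup_x\int|\tilde K(x,y)|\,dy<\infty$, and by the symmetric form of the bound $\sup_y\int|\tilde K(x,y)|\,dx<\infty$. Schur's test then yields $\|[b,T_3]f\|_2\leq C\|f\|_2$ on $C_c^{\infty}$, which extends by density to all of $L^2$.

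The main obstacle is essentially bookkeeping: one has to justify that $[b,T_3]$ makes sense as an operator on $L^2$ even though the symbol $b(x)=x_1$ is unbounded, and that the off-diagonal kernel representation really captures its action on a dense subspace. The substantive content — that the Bessel-type decay of $K_3$ for constant $V$ absorbs the Lipschitz growth factor $|x_1-y_1|$ — is immediate from specializing the estimate (\ref{e11}) already established in Section 2.
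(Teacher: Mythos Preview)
Your argument is correct and takes a genuinely different route from the paper. The paper's proof is a three-line Plancherel computation: with $V\equiv 1$ the operator $T_3$ is the Fourier multiplier $m(\xi)=i\xi(1+|\xi|^2)^{-1/2}$, multiplication by $b=x_j$ becomes $i\partial_{\xi_j}$ on the Fourier side, and hence $[b,T_3]$ is the bounded multiplier $i\partial_{\xi_j}m(\xi)$. Your approach instead specializes the kernel bound (\ref{e11}) to $V\equiv 1$, extracts the Bessel-type decay of $K_3$, and applies Schur's test to the commutator kernel $(x_1-y_1)K_3(x,y)$. This is longer but stays within the paper's own kernel machinery, makes concrete the remark preceding the theorem that the extra decay of $K_3$ lets the commutator ``absorb mild singularity,'' and would work verbatim for any $V\in B_q$ with $\inf_x m(x,V)>0$; the Plancherel argument is specific to constant $V$.

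One caveat: the step you call bookkeeping is the only place the argument is not yet closed. Schur's test bounds the integral operator $Sf(x)=\int\tilde K(x,y)f(y)\,dy$, but you have only identified $Sf$ with $[b,T_3]f$ off $\supp f$. On the support the $|x-y|^{-n}$ singularity of $K_3$ prevents writing $T_3f$ and $T_3(bf)$ as absolutely convergent integrals separately, so agreement there is not automatic. A standard fix is to truncate at $|x-y|>\epsilon$, where the commutator is exactly $\int_{|x-y|>\epsilon}\tilde K(x,y)f(y)\,dy$, and pass to the limit using dominated convergence on one side and the known $L^2$-convergence of truncations of $T_3$ (a Calder\'on--Zygmund operator when $V\equiv 1\in B_n$) on the other. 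This is routine given your estimates, but it is where the proof actually closes rather than mere accounting.
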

\begin{proof}
Consider $b=x_j$, we know that $b \notin \mathbf{BMO}$. We have
that,
$$[b,T_3]f=x_j\nabla(-\Delta+1)^{-1/2}f-\nabla(-\Delta+1)^{-1/2}(x_jf).$$
From Plancherel equality, we can get
\begin{eqnarray*}
\normo{[b,T_3]f}_2&=&\normo{\partial_j\brk{\frac{\xi}{(1+\xi^2)^{1/2}}
\hat{f}}-\frac{\xi}{(1+\xi^2)^{1/2}} \partial_j\hat{f}}_2\\
&=&\normo{\partial_j\brk{\frac{\xi}{(1+\xi^2)^{1/2}}} \hat{f}}_2\\
&\leq&\normo{f}_2.
\end{eqnarray*}
\end{proof}

The converse example in Theorem 2 implies that the assumption $V\in
B_{q}$ is too weak, it can not guarantee the function $b\in BMO$.
However if we assume $V$ satisfies some additional conditions, for
example, if $V$ is $L^p$ integrable, then the converse could be
true. Let $T_3'=(-\Delta)^{1/2}(-\Delta+V)^{-1/2}$, then from
$T_{3}^{'}=(-\triangle)^{-1/2}\nabla\cdot T_{3}$, we know the
results above also hold with $T_3$ replaced by $T_3'$.
\begin{theorem}
If $[b,T_3]$, $[b,T_3']$ and $V^{1/2}(-\triangle)^{-1/2}$ is bounded
on $L^2$, then $b\in \mathbf{BMO}$.
\end{theorem}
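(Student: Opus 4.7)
The plan is to deduce $L^{2}$-boundedness of the commutator $[b,R_j]$ for each component $R_j=\partial_j(-\Delta)^{-1/2}$ of the classical Riesz transform, and then invoke the converse direction of the Coifman--Rochberg--Weiss theorem \cite{CRW}. The key algebraic identity is
\[
R=\nabla(-\Delta)^{-1/2}=T_3\cdot U,\qquad U:=(-\Delta+V)^{1/2}(-\Delta)^{-1/2},
\]
so that $U$ is the formal inverse of $T_3'=(-\Delta)^{1/2}(-\Delta+V)^{-1/2}$; the identity is immediate from spectral calculus on Schwartz functions.

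The main analytic input is to show that $U$ extends to a bounded operator on $L^{2}$. For $f$ Schwartz, setting $g=(-\Delta)^{-1/2}f$ and using the quadratic form identity
\[
\|(-\Delta+V)^{1/2}g\|_2^2=\langle(-\Delta+V)g,g\rangle=\|\nabla g\|_2^2+\|V^{1/2}g\|_2^2,
\]
one obtains
\[
\|Uf\|_2^2=\|\nabla(-\Delta)^{-1/2}f\|_2^2+\|V^{1/2}(-\Delta)^{-1/2}f\|_2^2=\|f\|_2^2+\|V^{1/2}(-\Delta)^{-1/2}f\|_2^2,
\]
since $\nabla(-\Delta)^{-1/2}$ is an $L^{2}$ isometry by Plancherel. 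The hypothesis that $V^{1/2}(-\Delta)^{-1/2}$ is bounded on $L^{2}$ then gives $\|Uf\|_2\le C\|f\|_2$, so $U$ extends boundedly to $L^{2}$ and satisfies $T_3'U=UT_3'=I$ on a dense subspace.

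With $U$ in hand, I would expand, componentwise,
\[
[b,R_j]=[b,T_3^{(j)}U]=[b,T_3^{(j)}]\,U+T_3^{(j)}[b,U],\qquad T_3^{(j)}:=\partial_j(-\Delta+V)^{-1/2},
\]
combined with the standard identity $[b,U]=-U[b,T_3']U$, which yields
\[
[b,R_j]=[b,T_3^{(j)}]\,U-R_j\,[b,T_3']\,U.
\]
Each factor on the right is bounded on $L^{2}$ by the hypotheses on $[b,T_3]$ and $[b,T_3']$, together with the bounds for $U$ and $R_j$. Hence $[b,R_j]$ is bounded on $L^{2}$ for every $j=1,\dots,n$, and the classical converse of Coifman--Rochberg--Weiss delivers $b\in\mathbf{BMO}$.

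The main obstacle is the functional-analytic step: verifying that $U$ is genuinely the bounded two-sided inverse of $T_3'$ on all of $L^{2}$, and that the commutator manipulations, a priori valid only on a dense subspace such as the Schwartz class, extend by density. All the analytic content of the theorem sits in the quadratic form computation above, which is precisely where the hypothesis $V^{1/2}(-\Delta)^{-1/2}\in B(L^{2})$ is used; the remainder is algebraic once the classical CRW converse is available.
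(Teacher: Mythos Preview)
Your argument is correct and follows the same overall strategy as the paper: factor the classical Riesz transform as $R=T_3\,U$ with $U=(T_3')^{-1}=(-\Delta+V)^{1/2}(-\Delta)^{-1/2}$, show that $U$ is bounded on $L^2$, and then use the Leibniz rule for commutators together with the Coifman--Rochberg--Weiss converse. The paper writes the same decomposition as $T_3=R\,T_3'$ and isolates $[b,R]T_3'$ before composing with $T_3'^{-1}$ on the right, which is algebraically equivalent to your expansion $[b,R_j]=[b,T_3^{(j)}]U-R_j[b,T_3']U$.

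The one genuine difference is how the boundedness of $U$ is obtained. The paper factors
\[
Uf=(-\Delta+V)^{-1/2}(-\Delta)^{1/2}f+(-\Delta+V)^{-1/2}V^{1/2}\cdot V^{1/2}(-\Delta)^{-1/2}f,
\]
and then invokes the $L^2$ boundedness of $T_2=(-\Delta+V)^{-1/2}V^{1/2}$ (Theorem~\ref{tB}) and of $(T_3')^*$ to handle the two terms. Your quadratic form computation
\[
\|Uf\|_2^2=\|f\|_2^2+\|V^{1/2}(-\Delta)^{-1/2}f\|_2^2
\]
is more self-contained: it uses only the hypothesis on $V^{1/2}(-\Delta)^{-1/2}$ and avoids appealing to the earlier $L^p$ theory for $T_2$ and $T_3'$. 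Both routes are valid; yours is arguably cleaner, while the paper's has the advantage of staying within the operator-theoretic framework already developed.
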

\begin{proof}
From $[b,T_3]$, $[b,T_3']$ is bounded on $L^2$, and
$$[b,T_3]=[b,\nabla(-\Delta)^{-1/2}T_3']=[b,\nabla(-\Delta)^{-1/2}]T_3'+\nabla(-\Delta)^{-1/2}[b,T_3'],$$
we have $[b,\nabla(-\Delta)^{-1/2}]T_3'$ is bounded on $L^2$.

We claim that, $[b,\nabla(-\Delta)^{-1/2}]$ is bounded on $L^2$,
which implies the theorem from the well known theorem of Coifman,
Rochberg and Weiss. It suffices to prove that $T_3'$ has a converse
bounded on $L^2$. Note that
$T_3'^{-1}=(-\Delta+V)^{1/2}(-\Delta)^{-1/2}$, and
\begin{eqnarray*}
T_3'^{-1}f&=&(-\Delta+V)^{1/2}(-\Delta)^{-1/2}f\\
&=&(-\Delta+V)^{-1/2}(-\Delta+V)(-\Delta)^{-1/2}f\\
&=&(-\Delta+V)^{-1/2}(-\Delta)^{1/2}f+(-\Delta+V)^{-1/2}V^{1/2}V^{1/2}(-\Delta)^{-1/2}f,
\end{eqnarray*}

Therefore, by using $V^{1/2}(-\triangle)^{-1/2}$ is bounded on
$L^{2}$, we can easily get the conclusion of Theorem 3.
\end{proof}

\begin{corollary}
If $[b,T_3]$, $[b,T_3']$ is bounded on $L^2$, and $V\in
L^{n/2}\bigcap B_q$ for $q>n/2$, then $b\in \mathbf{BMO}$.
\end{corollary}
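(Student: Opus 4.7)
The plan is to reduce the Corollary to Theorem~3 by verifying its only missing hypothesis, namely that $V^{1/2}(-\Delta)^{-1/2}$ is bounded on $L^2(\mathbb{R}^n)$. Once that is established, the hypothesis ``$[b,T_3]$ and $[b,T_3']$ bounded on $L^2$'' together with the conclusion of Theorem~3 immediately forces $b \in \mathbf{BMO}$, and there is nothing further to do.

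To verify the $L^2$ boundedness of $V^{1/2}(-\Delta)^{-1/2}$, I would factor the operator through a Sobolev embedding. The Hardy--Littlewood--Sobolev inequality (equivalently, the Sobolev embedding for the Riesz potential $I_1 = (-\Delta)^{-1/2}$) gives
$$\normo{(-\Delta)^{-1/2} f}_{\frac{2n}{n-2}} \leq C \normo{f}_2, \qquad f \in L^2(\mathbb{R}^n),$$
which is classical for $n\geq 3$. Next, the assumption $V \in L^{n/2}$ is equivalent to $V^{1/2} \in L^n$. A direct application of H\"older's inequality with the exponent identity
$$\rev{2} = \rev{n} + \frac{n-2}{2n}$$
then yields
$$\normo{V^{1/2}(-\Delta)^{-1/2}f}_2 \leq \normo{V^{1/2}}_n \normo{(-\Delta)^{-1/2}f}_{\frac{2n}{n-2}} \leq C\normo{V}_{n/2}^{1/2} \normo{f}_2.$$
This is exactly the $L^2$ boundedness required to invoke Theorem~3.

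With this ingredient in hand, the remaining content of the Corollary follows verbatim from Theorem~3: the identities
$$[b,T_3] = [b,\nabla(-\Delta)^{-1/2}]T_3' + \nabla(-\Delta)^{-1/2}[b,T_3'],$$
and
$$T_3'^{-1} = (-\Delta+V)^{-1/2}(-\Delta)^{1/2} + (-\Delta+V)^{-1/2}V^{1/2}\cdot V^{1/2}(-\Delta)^{-1/2},$$
combined with the $L^2$ boundedness of $(-\Delta+V)^{-1/2}(-\Delta)^{1/2}$ (which holds under $V\in B_q$, $q\geq n/2$, by Theorem~\ref{tB} applied through the resolvent identity) and of $V^{1/2}(-\Delta)^{-1/2}$ just established, show that $T_3'$ is invertible on $L^2$. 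Hence $[b,\nabla(-\Delta)^{-1/2}]$ is $L^2$-bounded, and by the classical Coifman--Rochberg--Weiss theorem we conclude $b \in \mathbf{BMO}$.

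The main (and really only) obstacle is the verification of the $L^2$ boundedness of $V^{1/2}(-\Delta)^{-1/2}$, and even this reduces to a one-line H\"older argument once one recognizes that the assumption $V \in L^{n/2}$ pairs exactly with the $L^2 \to L^{2n/(n-2)}$ Sobolev embedding of $(-\Delta)^{-1/2}$; the condition $V\in L^{n/2}$ is in fact the sharp integrability assumption making this scheme work in all dimensions $n\geq 3$.
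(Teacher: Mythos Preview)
Your proposal is correct and follows essentially the same approach as the paper: reduce to Theorem~3 by verifying the $L^2$ boundedness of $V^{1/2}(-\Delta)^{-1/2}$ via H\"older's inequality combined with the Sobolev/Hardy--Littlewood--Sobolev bound $\normo{(-\Delta)^{-1/2}f}_{2n/(n-2)}\leq C\normo{f}_2$. The only difference is that you also recap the mechanism inside Theorem~3, which the paper simply invokes; note that your attribution of the $L^2$ boundedness of $(-\Delta+V)^{-1/2}(-\Delta)^{1/2}$ to Theorem~\ref{tB} is slightly off---it is more directly the adjoint of $T_3'$, whose $L^2$ boundedness follows from that of $T_3$.
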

\begin{proof}
we only need to prove that $V^{1/2}(-\Delta)^{-1/2}$ is bounded on
$L^2$. This follows directly from H\"older inequality and fractional
integration that,
$$\normo{V^{1/2}(-\Delta)^{-1/2}f}_2\leq C\normo{V^{1/2}}_{n}\normo{(-\Delta)^{-1/2}f}_{2n/(n-2)}\leq
C\normo{V}_{n/2}^{1/2}\normo{f}_2.$$
\end{proof}

\footnotesize


\end{document}